\documentclass[11pt,twoside,a4paper]{article}
\usepackage{amsmath}
\usepackage{amssymb}
\usepackage{amsthm}
\usepackage{algorithmic}
\usepackage{authblk}
\usepackage{cite}
\usepackage[english]{babel}
\usepackage[T1]{fontenc}
\usepackage{fullpage}
\usepackage{graphicx}
\usepackage{mathtools}
\usepackage{pgf}
\usepackage{tikz}
\usepackage{url}
\usepackage{caption}
\captionsetup{format=hang, font=small,labelfont=bf,labelsep=colon}
\usepackage{hyperref}

\usepackage[noabbrev,capitalise,nameinlink]{cleveref}
\usepackage{fourier}
\usepackage{times}
\usepackage[activate={true,nocompatibility},
            final,tracking=false,
            kerning=true,
            spacing=true,
            factor=1100,
            stretch=10,
            shrink=10]{microtype}
\microtypecontext{spacing=nonfrench}
\DeclareMathAlphabet{\mathcal}{OMS}{cmsy}{m}{n}
\hypersetup{colorlinks,
            breaklinks,
            urlcolor=blue,
            linkcolor=blue,
            citecolor=blue}

\numberwithin{equation}{section}
\creflabelformat{equation}{\textup{(#1)}}

\newcommand{\stencilpt}[4][]{\node[circle,fill,draw,inner sep=2.5pt,label={above:#4},#1] at (#2) (#3) {}}
\newcommand{\bigstencilpt}[4][]{\node[circle,fill,draw,inner sep=4.5pt,label={[label distance=-3pt]90:#4},#1] at (#2) (#3) {}}
\newcommand{\stencilptund}[4][]{\node[circle,fill,draw,inner sep=2.5pt,label={[label distance=-3pt]270:#4},#1] at (#2) (#3) {}}
\newcommand{\bigstencilptund}[4][]{\node[circle,fill,draw,inner sep=4.5pt,label={[label distance=-3pt]270:#4},#1] at (#2) (#3) {}}
\newcommand{\stencilsq}[4][]{\node[rectangle,fill,draw,inner sep=3.5pt,label={above:#4},#1] at (#2) (#3) {}}

\definecolor{rednode}{rgb}{0.8,0.2,0.3}
\definecolor{greennode}{rgb}{0.2,0.8,0.3}
\definecolor{bluenode}{rgb}{0.3,0.2,0.8}

\newtheorem{assumption}{Assumption}
\newtheorem{definition}{Definition}
\newtheorem{algorithm}{Algorithm}
\newtheorem{theorem}{Theorem}
\newtheorem{lemma}{Lemma}

\title{Variational image regularization with Euler's elastica using a discrete gradient scheme}
\author{Torbj\o rn Ringholm\thanks{Department of Mathematical Sciences, Norwegian University of Science and Technology, N-7491 Trondheim, Norway.}
, Jasmina Lazi\'c\thanks{MathWorks, Matrix House, 10 Cowley Rd, Cambridge CB4 0HH, UK, and Mathematical Institute, Serbian Academy of Sciences and Arts, Kneza Mihaila 36, Beograd 11001, Serbia. }
, Carola-Bibiane Sch\"onlieb\thanks{Department of Applied Mathematics and Theoretical Physics (DAMTP), University of Cambridge, Wilberforce Road, Cambridge CB3 0WA, United Kingdom.}}

\begin{document}
\maketitle

\begin{abstract}
This paper concerns an optimization algorithm for unconstrained non-convex problems where the objective function has sparse connections between the unknowns. 
The algorithm is based on applying a dissipation preserving numerical integrator, the Itoh--Abe discrete gradient scheme, to the gradient flow of an objective function, guaranteeing energy decrease regardless of step size. 
We introduce the algorithm, prove a convergence rate estimate for non-convex problems with Lipschitz continuous gradients, and show an improved convergence rate if the objective function has sparse connections between unknowns. 
The algorithm is presented in serial and parallel versions.
Numerical tests show its use in Euler's elastica regularized imaging problems and its convergence rate and compare the execution time of the method to that of the iPiano algorithm and the gradient descent and Heavy-ball algorithms.

\textbf{AMS subject classifications:} 49M25, 49M37, 65K10, 68W10, 90C26, 90C30
\end{abstract}

\section{Introduction}

A classic idea for minimizing a differentiable $V: \mathbb{R}^n \rightarrow \mathbb{R}$, $ n \geq 1$, is considering its gradient flow 
\begin{align}
\dot{\mathbf{u}}(t) = - \nabla V(\mathbf{u}(t)) \label{eq:gradflowintro}
\end{align}
and numerically integrating a solution along it. For example, the gradient descent algorithm can be easily derived from the explicit Euler scheme
\begin{align*}
\mathbf{u}^{k+1} - \mathbf{u}^k = -  \tau \nabla V(\mathbf{u}^k),
\end{align*} 
where $\tau$ is a step size and $\mathbf{u}^k$ an approximation to the value of $\mathbf{u}(k \tau)$. Other schemes can be used, such as Runge-Kutta and multistep methods. A discussion on step size conditions under which algebraically stable Runge-Kutta methods are dissipative can be found in \cite{hairer2013energy}. Even though these classes of ODE integrators are readily available, their use does not appear to have gained much traction in the optimization community. 

This disregard may be attributed to a division between the goals of numerical integration and numerical optimization; whereas the ODE integration schemes seek to approximate a solution path of \eqref{eq:gradflowintro} as accurately as possible, optimization schemes try to find a stationary point of \eqref{eq:gradflowintro} as quickly as possible. The former task requires small time steps while the latter task is generally completed more efficiently the larger the time steps are.
Thus, using regular ODE schemes to solve \eqref{eq:gradflowintro} is, in general, ineffective. 
However, in a recent article \cite{NIPS2017_6711}, the authors demonstrate that several well-known efficient optimization methods can be deduced from ODE integration schemes applied to equation \eqref{eq:gradflowintro}. Examples include Polyak's Heavy-ball method \cite{polyak1964some}, which may also be interpreted as a discretization of a gradient flow with an inertial term, and Nesterov's accelerated gradient method \cite{nesterov1983method}. 
These methods are equivalent to linear two-step methods with certain choices of step length. 
Also, the proximal point and proximal gradient methods \cite{nesterov2013gradient} are shown to correspond to an implicit Euler method and an Implicit-Explicit scheme, respectively. This gives credibility to the idea that ODE solvers with certain properties may indeed be useful as optimization schemes.

In recent years, new ODE solvers with properties well suited to optimization have emerged. In \cite{grimm_2016}, building on developments in the field of geometric integration, the authors apply discrete gradient schemes to the gradient flow of energy functionals arising from problems in variational image analysis. Discrete gradients, introduced in \cite{gonzalez96tia} and further studied in \cite{mclachlan99giu} have a property which is interesting from an optimization viewpoint; they are \textit{dissipativity preserving}. When applied to a dissipative ODE such as a gradient flow, the numerical solution is dissipative in the sense that
\begin{align*}
V(\mathbf{u}^{k+1}) \leq V(\mathbf{u}^k).
\end{align*}
The schemes thus convergence monotonously toward a critical point $V^*$ regardless of the step size used in the numerical integration if $V$ is continuously differentiable \cite{grimm_2016}.

While very efficient solvers exist for convex optimization problems, see e.g. \cite{Chambolle2011, nesterov1994interior, shor1985minimization}, the picture is different for non-convex optimization problems. Considerable effort has been spent in developing efficient schemes for classes of problems with special structure, e.g. problems with one convex but non-differentiable term and one non-convex but differentiable term \cite{Bolte2014,Ochs2014}. We will add to this effort by introducing a method based on the Itoh--Abe discrete gradient method \cite{Itoh1988} that is most effective when the objective function is continuously differentiable with sparsely connected unknowns. Indeed, in \cite{grimm_2016}, the authors use discrete gradient schemes with non-convex problems in mind, so this paper may be viewed as a continuation of their work.

A problem that fits the format of being non-convex with sparse connections is variational image analysis using a discretized Euler's elastica functional as a regularizer. Introduced in \cite{Nitzberg1993} for de-occluding objects in images, Euler's elastica regularization was further analyzed and applied to inpainting problems by Chan, Kang, and Shen in \cite{ChanKangShen}, who derive the Euler-Lagrange equations for the continuous Euler's elastica functional and solve these via finite difference schemes. This approach is not very computationally efficient, and attempts have been made to create more effective schemes, in particular in \cite{Tai2011} where an augmented Lagrangian approach was considered. This approach was later refined in \cite{zhu2013augmented}, \cite{Zhang2016}, \cite{bae2017augmented} and \cite{zhang_chen_deng_wang_2017}. Also of note are the approaches in \cite{BrediesPockWirth} and \cite{chambolle2017total} where convex approximations to the objective function are considered.

The method presented in \cref{alg:DG} resembles the coordinate gradient descent method, except that the gradient is approximated by a discrete gradient. 
It is derivative free and easy to use with only a step size parameter to choose. The scheme guarantees decrease at each iteration, at the expense of being implicit. 
A recent survey of coordinate descent algorithms and their convergence can be found in \cite{wright2015coordinate}. 
According to this survey, for coordinate descent methods one can expect $V(x^k) - V^* \in \mathcal{O}(1/k)$ for convex problems, with linear convergence if the problem is strongly convex. In \cite{CoordDescNesterov}, an accelerated coordinate descent method is presented, with $V(x^k) - V^* \in \mathcal{O}(1/k^2)$ for convex problems at the expense of computing a full vector operation for each coordinate update.
In the following, we will prove a convergence rate of $\min_{1 \leq j \leq k} \left\lbrace  \| \nabla V(\mathbf{u}^{j}) \|^2 \right\rbrace   \in \mathcal{O}(1/k^{1/2})$ for nonconvex, Lipschitz continuously differentiable problems. 
In \cite{erlend2018}, an $\mathcal{O}(1/k)$ convergence rate for convex, smooth problems and linear convergence for problems satisfying the Polyak-\L{}ojasiewicz condition \cite{karimi2016linear} are proved for several discrete gradient algorithms, including the one considered here. 
In the case of the Itoh--Abe discrete gradient, the rates have an $\mathcal{O}(n^{1/2})$ dependence on the problem size $n$; in \cref{lemma:MainLemmaSharper} we show that the rates can be improved when $V$ has sparsely connected unknowns. 
A convergence result for Itoh--Abe type discrete gradient schemes applied to non-differentiable, non-convex problems is presented in \cite{riis2018geometric}, with applications to parameter estimation problems. 

The paper is organized as follows: In the following section, we introduce discrete gradient methods for optimization and discuss the convergence rate and acceleration of a specific discrete gradient-type scheme. We also propose a method for adaptive time steps that allows acceleration of the algorithm for differentiable $V$, using four additional parameters. In section 3, the Euler's elastica regularization problem is introduced, and parallelization of the discrete gradient algorithm is discussed together with the effect of sparsity in the problem. Section 4 contains numerical experiments concerning the quality of denoising and inpainting, experimental convergence rates, the effect of coordinate ordering and problem size, execution time, and dependence on the initial condition. The final section summarizes the results.

\section{Discrete gradient methods}
The task at hand is to minimize a $\mathcal{C}^1$ functional $V: \mathbb{R}^n \rightarrow \mathbb{R}$, also called an energy, by solving its gradient flow
\begin{align}
\dot{\mathbf{u}} = - \nabla V(\mathbf{u}), \label{eq:gradflow}
\end{align}
where $\mathbf{u}(t) \in \mathbb{R}^n$ is the unknown and $\dot{\mathbf{u}}(t)$ denotes its time derivative. The reason for this is that $V$ dissipates along the flow of \eqref{eq:gradflow}; if $\mathbf{u}(t)$ solves \eqref{eq:gradflow}, then
\begin{align*}
\dfrac{\mathrm{d}}{\mathrm{d}t} V(\mathbf{u}(t)) = \left\langle \dot{\mathbf{u}}, \nabla V (\mathbf{u}(t)) \right\rangle= - \| \nabla V (\mathbf{u}(t)) \|^2 \leq 0,
\end{align*}
where $\|\cdot \|$ denotes the Euclidian norm on $\mathbb{R}^n$. Due to the dissipation, $\mathbf{u}(t)$ approaches a critical point of $V$ as $t \rightarrow \infty$ as long as $V$ is bounded from below. In general, $V$ is nonlinear such that numerical schemes must be employed to solve \eqref{eq:gradflow} until a large stopping time $T$. This gives rise to different optimization algorithms depending on the scheme used. For example, a forward Euler scheme results in the gradient descent method. The forward Euler method has several drawbacks, one being that choosing too large step sizes results in instability. This necessitates step size selection, which may result in impractically small steps considering that we wish to obtain a stationary point of \eqref{eq:gradflow}. It is therefore of interest to investigate the use of numerical schemes that have lenient step size restrictions or none at all. One such class of schemes is called \textit{discrete gradient} methods. Discrete gradients were introduced in \cite{gonzalez96tia} to unite several energy preserving and dissipative ODE solvers under a single label. A seminal paper \cite{mclachlan99giu} covers their use as ODE solvers and which ODEs they are applicable to. 

\begin{definition} \label{def:DG}
Given a differentiable function $V: \mathbb{R}^n \rightarrow \mathbb{R}$, we say that $\overline{\nabla} V  : \mathbb{R}^n \times \mathbb{R}^n \rightarrow \mathbb{R}^n$ is a discrete gradient of $V$ if it is continuous and for all $\mathbf{u}, \mathbf{v} \in \mathbb{R}^n$,
\begin{align*}
\left\langle \overline{\nabla} V(\mathbf{u},\mathbf{v}),\mathbf{v} - \mathbf{u} \right\rangle&= V(\mathbf{v}) - V(\mathbf{u}),\\
\lim_{\mathbf{v} \rightarrow \mathbf{u}}\overline{\nabla} V(\mathbf{u},\mathbf{v}) &= \nabla V(\mathbf{u}).
\end{align*} 
\end{definition}
Discrete gradients can be used in schemes to solve \eqref{eq:gradflow} numerically by computing
\begin{align}
\mathbf{u}^{k+1} = \mathbf{u}^k - \tau_k \overline{\nabla} V(\mathbf{u}^k,\mathbf{u}^{k+1}), \label{eq:DGscheme}
\end{align}
where $\tau_k > 0$ is the step size at iteration number $k$. A key property is that the scheme is dissipating; by \cref{def:DG} and the scheme \eqref{eq:DGscheme}, we have
\begin{align}
V(\mathbf{u}^{k+1}) - V(\mathbf{u}^k) =  \left\langle\overline{\nabla} V(\mathbf{u}^k,\mathbf{u}^{k+1}),\mathbf{u}^{k+1} - \mathbf{u}^k \right\rangle
 = -\dfrac{1}{\tau_k} \| \mathbf{u}^{k+1} - \mathbf{u}^k \|^2.  \label{eq:DGdissip}
\end{align}
Note that the dissipation property holds regardless of the step size $\tau_k$. 

\Cref{def:DG} is quite broad and as a result, there exist several types of discrete gradients. Two popular choices are the midpoint discrete gradient \cite{gonzalez96tia}
and the Average Vector Field (AVF) discrete gradient \cite{Harten1983}.
They give second-order accurate schemes for \eqref{eq:gradflow} and are suited for solving ODEs precisely. In our case, solving \eqref{eq:gradflow} as exactly as possible is not the main concern, rather, we need a scheme with cheap time steps and fast convergence toward a minimizer. 
The schemes obtained using the Gonzalez and AVF discrete gradients are fully implicit in the sense that in general, at each time step of \eqref{eq:DGscheme}, a single $n$-dimensional system of nonlinear equations must be solved. For large $n$, this is slow since the complexity of solving such a system is typically $\mathcal{O}(n^2)$. 
Instead, we consider the Itoh--Abe discrete gradient \cite{Itoh1988}, defined componentwise as 
\begin{align*}
(\overline{\nabla} V(\mathbf{u},\mathbf{v}))_l= \dfrac{V\left(\mathbf{u} + \sum \limits_{j=1}^l (v_{j} - u_{j} )\mathbf{e}_{j} \right) - V\left(\mathbf{u} + \sum \limits_{j=1}^{l-1} (v_{j} - u_{j})\mathbf{e}_{j}\right)}{v_{l} - u_{l} },
\end{align*}
where $\mathbf{e_j}$ denotes the $j$'th standard basis vector. This discrete gradient, while still implicit, has two advantages over the Gonzales and AVF discrete gradients. First, its use in the scheme \eqref{eq:DGscheme} requires the solution of $n$ scalar nonlinear equations per time step, meaning its computational complexity scales as $\mathcal{O}(n)$. 
Secondly, it is derivative-free and requires only computations of differences between the objective function with variation in one variable, which may be considerably cheaper than evaluating the objective function itself; consider for example the optimization problem
\begin{align*}
\min \limits_{\mathbf{u} \in \mathbb{R}^n} \left\lbrace V(\mathbf{u}) = \sum_{i= 1}^M V_i(\mathbf{u}) \right\rbrace,
\end{align*}
where at most $N$ of the $V_i$ depend on a given coordinate, say, $u_k$. Then, computing the difference $V(\mathbf{u} + \mathbf{e}_k u_k) - V(\mathbf{u}) $ amounts to computing at most $N$ values of the $V_i$, a cost comparable to that of calculating one coordinate derivative of $V$.

\subsection{The algorithm}

The algorithm based on using the Itoh--Abe discrete gradient with fixed step size $\tau_k = \tau$ in \eqref{eq:DGscheme} is presented in \cref{alg:DG}. As a stopping criterion we set a tolerance $tol$ and stop when $(V(\mathbf{u}^k) - V(\mathbf{u}^{k-1}))/V(\mathbf{u}^0) < tol$. This criterion is economical to evaluate, requiring no evaluation of $V$ since the energy increments are known from the dissipation property \eqref{eq:DGdissip}. 
\begin{algorithm}[DG]
\begin{algorithmic}
\label{alg:DG}
\STATE{}
\STATE{$\text{Choose } \tau > 0, \, tol > 0 \text{ and } \mathbf{u}^0 \in \mathbb{R}^n. \text{ Set } k = 0. $}
\REPEAT 
\STATE{$\mathbf{v}_0^k = \mathbf{u}^k $}
\FOR{$j = 1,...,n$}
\STATE{Solve $\beta_j^k  = -\tau (V(\mathbf{v}_{j-1}^{k} + \beta_j^k \mathbf{e}_j) - V(\mathbf{v}_{j-1}^{k}))/\beta_j^k$}
\STATE{$\mathbf{v}_j^k = \mathbf{v}_{j-1}^{k} + \beta_j^k \mathbf{e}_j$}
\ENDFOR
\STATE{$\mathbf{u}^{k+1} = \mathbf{v}_{n}^k$}
\STATE{$k = k + 1$}
\UNTIL{$\left(V(\mathbf{u}^k) - V(\mathbf{u}^{k-1})\right)/V(\mathbf{u}^0) < tol$}
\end{algorithmic}
\end{algorithm}
To solve the nonlinear scalar subproblems defining the $\beta_j^k$, we use the Brent-Dekker algorithm \cite{brent1971algorithm}. This is a derivative free method based on a combination of bisection and interpolation algorithms which converges superlinearly if the function whose root is to be found is $\mathcal{C}^1$ near the root. It is the method of choice for scalar root finding problems in \cite{press2007numerical}.

The algorithm can be accelerated through adaptive step sizes. Unlike line search methods, each time step is implicit and so changing $\tau_k$ requires a recomputation of $\mathbf{u}^{k+1} $, which can be costly and should be avoided. One way of adapting $\tau_k$, which can be used for differentiable $V$, is to check conditions similar to the Wolfe conditions \cite{wolfe1969convergence}. We consider, with constants $c_1 \in (0,1)$ and $c_2 \in (c_1,1)$ the conditions
\begin{align}
V(\mathbf{u}^{k+1}) - V(\mathbf{u}^{k}) &\leq c_1 \left\langle \nabla V(\mathbf{u}^{k}),\mathbf{u}^{k+1} - \mathbf{u}^{k}\right\rangle, \label{eq:wolfe1}\\
\left\langle \nabla V(\mathbf{u}^{k+1}), \mathbf{u}^{k+1} - \mathbf{u}^{k}\right\rangle &\geq c_2 \left\langle\nabla V(\mathbf{u}^{k}), \mathbf{u}^{k+1} - \mathbf{u}^{k}\right\rangle . \label{eq:wolfe2}
\end{align}
If condition \eqref{eq:wolfe1} holds, regardless of whether \eqref{eq:wolfe2} holds, then $\tau_k$ is increased for the next iteration by a factor $\lambda$ > 1. If \eqref{eq:wolfe1} does not hold but \eqref{eq:wolfe2} does, one takes $\tau_{k+1} = \rho\tau_{k}$ where $\rho \in (0,1)$. If neither condition holds, the step size is not changed. In all cases, the new value $\mathbf{u}^{k+1}$ is accepted. With this approach one obtains step sizes that are adjusted based on prior performance while not wasting previous computations, summed up in \cref{alg:DG-ADAPT}. 
\begin{algorithm}[DG-ADAPT]
\begin{algorithmic}
\label{alg:DG-ADAPT}
\STATE{}
\STATE{$\text{Choose } \tau_0 > 0, \, tol > 0, \, \rho \in (0,1), \, \lambda > 1, \, c_1 \in (0,1), \, c_2 \in (c_1,1), \text{ and } \mathbf{u}^0 \in \mathbb{R}^n.$}
\STATE{$\text{Set } k = 0. $}
\REPEAT 
\STATE{$\mathbf{v}_0^k = \mathbf{u}^k $}
\FOR{$j = 1,...,n$}
\STATE{Solve $\beta_j^k  = -\tau_k (V(\mathbf{v}_{j-1}^{k} + \beta_j^k \mathbf{e}_j) - V(\mathbf{v}_{j-1}^{k}))/\beta_j^k$}
\STATE{$\mathbf{v}_j^k = \mathbf{v}_{j-1}^{k} + \beta_j^k \mathbf{e}_j$}
\ENDFOR
\STATE{$\mathbf{u}^{k+1} = \mathbf{v}_{n}^k$}
\IF{$V(\mathbf{u}^{k+1}) - V(\mathbf{u}^{k}) \leq  c_1  \left\langle \nabla V(\mathbf{u}^{k}), \mathbf{u}^{k+1} - \mathbf{u}^{k}\right\rangle$}
\STATE{$\tau_{k+1} = \lambda\tau_k$}
\ELSIF {$\left\langle\nabla V(\mathbf{u}^{k+1}), \mathbf{u}^{k+1} - \mathbf{u}^{k} \right\rangle \geq c_2  \left\langle \nabla V(\mathbf{u}^{k}), \mathbf{u}^{k+1} - \mathbf{u}^{k} \right\rangle$}
\STATE{$\tau_{k+1} = \rho\tau_k$}
\ENDIF
\STATE{$k = k + 1$}
\UNTIL{$\left(V(\mathbf{u}^k) - V(\mathbf{u}^{k-1})\right)/V(\mathbf{u}^0) < tol$}
\end{algorithmic}
\end{algorithm}
Condition \eqref{eq:wolfe2} provides a lower bound on the step size when $\nabla V$ is Lipschitz continuous with Lipschitz constant $L$.
Firstly, since $\nabla V$ is Lipschitz, the descent lemma \cite[Proposition A.24]{bertsekas1999nonlinear}, provides the estimate
\begin{align}
V(\mathbf{u}) &\leq V(\mathbf{v}) + \left\langle \nabla V(\mathbf{v}), \mathbf{u} - \mathbf{v} \right\rangle + \dfrac{L}{2} \|\mathbf{v} - \mathbf{u} \|^2 \label{eq:descentLemmaPure}
\end{align}
which holds for all $\mathbf{u},\mathbf{v} \in \mathbb{R}^n$. Combining \eqref{eq:descentLemmaPure} with \eqref{eq:wolfe2} and \eqref{eq:DGdissip} we find
\begin{align*}
c_2\left\langle\nabla V(\mathbf{u}^{k}), \mathbf{u}^{k} - \mathbf{u}^{k+1}\right\rangle &\geq \left\langle \nabla V(\mathbf{u}^{k+1}), \mathbf{u}^{k} - \mathbf{u}^{k+1}\right\rangle \\
& \geq V(\mathbf{u}^{k}) - V(\mathbf{u}^{k+1}) - \frac{L}{2}\|\mathbf{u}^{k+1} - \mathbf{u}^{k} \|^2\\
& = \left(1 - \frac{L}{2}\tau_k \right)\left(V(\mathbf{u}^{k}) - V(\mathbf{u}^{k+1})\right).
\end{align*}
Rearranging and applying \eqref{eq:descentLemmaPure} once more, we find
\begin{align*}
\frac{L}{2}\tau_k \left(V(\mathbf{u}^{k}) - V(\mathbf{u}^{k+1})\right) \geq (1-c_2)\left(V(\mathbf{u}^{k}) - V(\mathbf{u}^{k+1})\right) -  \frac{c_2L}{2}\|\mathbf{u}^{k+1} - \mathbf{u}^{k} \|^2.
\end{align*}
Using \eqref{eq:DGdissip} on the last term to eliminate $V(\mathbf{u}^{k}) - V(\mathbf{u}^{k+1})$ we find the lower bound
\begin{align*}
\tau_k \geq \frac{1-c_2}{1+c_2} \frac{2}{L}.
\end{align*}

\subsection{Convergence}
\label{sect:theory}

In \cite{grimm_2016}, the authors prove that the iterates of \cref{alg:DG} converge toward a critical point, but do not estimate the convergence rate. 
\cref{theo:Convergence} concerns the convergence rate of \cref{alg:DG-ADAPT} in the general case of a non-convex objective $V$.
A sublinear convergence rate of $\mathcal{O}(1/k)$ for convex problems and a linear convergence rate for problems satisfying the Polyak-\L{}ojasiewicz inequality are given in \cite{erlend2018}. 
These theorems are stated below without proof to support the discussion of numerical results presented in \cref{sect:Numerical}, where better rates than the ones proved in \cref{theo:Convergence} are observed when choosing $\epsilon$ in the smoothing of the Euler's elastica regularizer large enough.
The following assumption is common to these theorems and \cref{lemma:MainLemmaSharper} in the subsequent section. Here, by coordinate Lipschitz continuity of $f:\mathbb{R}^n \rightarrow \mathbb{R}^n$, we mean that for $j = 1,...,n$, $| f_j(\mathbf{x}) - f_j(\mathbf{y})| \leq L_j \|\mathbf{x}-\mathbf{y}\|$.

\begin{assumption}\label{ass:MainAssumption}
The function $V: \mathbb{R}^n \rightarrow \mathbb{R}$ is $\mathcal{C}^1$, bounded from below and coercive. Furthermore, $\nabla V$ is Lipschitz with Lipschitz constant $L$ and coordinatewise Lipschitz constants $L_j \in [L_{\min}, L_{\max}]$, and all time steps $\tau_{j}$  lie in $[\tau_{\min},\tau_{\max}] \subset \mathbb{R}^+$.
\end{assumption}

\noindent The following proof is inspired by that of \cite[Lemma 3.3]{beck2013convergence}, and bears resemblance to the classical theorem of Zoutendijk for line search methods \cite[Theorem 3.2]{nocedal2006numerical}, with the exception that it does not rely on a Wolfe condition for step sizes and explicitly states a convergence rate.

\begin{theorem}\label{theo:Convergence} 
If \cref{ass:MainAssumption} holds, the $\mathbf{u}^k$ produced by \cref{alg:DG-ADAPT} satisfy
\begin{align*}
\min \limits_{1 \leq j \leq k} \left\lbrace  \| \nabla V(\mathbf{u}^{j}) \|^2 \right\rbrace   \leq \nu  \dfrac{V(\mathbf{u}^0) - V^*}{k}, \quad \nu = 2  L_{\max}^2 \left( \tau_{\max}n + \dfrac{\tau_{\max}}{L_{\max}^2\tau_{\min}^2}   \right)  
\end{align*}
where $V^* > -\infty$ is a local minimum.
\end{theorem}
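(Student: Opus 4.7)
The plan is to show the one-step inequality
\[
\|\nabla V(\mathbf{u}^k)\|^2 \le \nu\bigl(V(\mathbf{u}^k) - V(\mathbf{u}^{k+1})\bigr),
\]
then telescope and use $V(\mathbf{u}^K)\ge V^*$ to get the claimed rate on the minimum.

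The first step, which I expect to be the heart of the argument, is to relate the components of $\nabla V(\mathbf{u}^k)$ to the scalar increments $\beta_j^k$ produced by the Itoh--Abe sweep. The defining equation of $\beta_j^k$ in \cref{alg:DG-ADAPT} rewrites as
\[
\frac{V(\mathbf{v}_{j-1}^k + \beta_j^k\mathbf{e}_j) - V(\mathbf{v}_{j-1}^k)}{\beta_j^k} \;=\; -\frac{\beta_j^k}{\tau_k}.
\]
Applying the one-variable mean value theorem to $t\mapsto V(\mathbf{v}_{j-1}^k + t\mathbf{e}_j)$ gives some $\xi_j^k$ between $0$ and $\beta_j^k$ with $\partial_j V(\mathbf{v}_{j-1}^k + \xi_j^k\mathbf{e}_j) = -\beta_j^k/\tau_k$. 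Then by the coordinate-Lipschitz hypothesis (\cref{ass:MainAssumption}),
\[
\bigl|\partial_j V(\mathbf{u}^k)\bigr|
\;\le\; \frac{|\beta_j^k|}{\tau_k} + L_j\,\bigl\|\mathbf{u}^k-\mathbf{v}_{j-1}^k-\xi_j^k\mathbf{e}_j\bigr\|.
\]
Since $\mathbf{u}^k-\mathbf{v}_{j-1}^k = -\sum_{i<j}\beta_i^k\mathbf{e}_i$ and $|\xi_j^k|\le|\beta_j^k|$, the norm in the last term is bounded by $\|\mathbf{u}^{k+1}-\mathbf{u}^k\|$.

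Squaring, using $(a+b)^2\le 2a^2+2b^2$, summing over $j$, and invoking $\sum_j(\beta_j^k)^2 = \|\mathbf{u}^{k+1}-\mathbf{u}^k\|^2$ together with $L_j\le L_{\max}$, I get
\[
\|\nabla V(\mathbf{u}^k)\|^2 \;\le\; \left(\frac{2}{\tau_k^2} + 2nL_{\max}^2\right)\|\mathbf{u}^{k+1}-\mathbf{u}^k\|^2.
\]
Now I use the dissipation identity \eqref{eq:DGdissip} to substitute $\|\mathbf{u}^{k+1}-\mathbf{u}^k\|^2 = \tau_k\bigl(V(\mathbf{u}^k)-V(\mathbf{u}^{k+1})\bigr)$, and bound $\tau_k\le\tau_{\max}$, $\tau_k^{-2}\le\tau_{\min}^{-2}$ via \cref{ass:MainAssumption}. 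Re-collecting, the prefactor is exactly
\[
2L_{\max}^2\!\left(n\tau_{\max} + \frac{\tau_{\max}}{L_{\max}^2\tau_{\min}^2}\right) = \nu,
\]
which establishes the one-step inequality above.

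Finally I telescope from iteration $1$ to $k$: since the right-hand side collapses,
\[
\sum_{j=1}^{k}\|\nabla V(\mathbf{u}^j)\|^2 \;\le\; \nu\bigl(V(\mathbf{u}^1) - V(\mathbf{u}^{k+1})\bigr) \;\le\; \nu\bigl(V(\mathbf{u}^0) - V^*\bigr),
\]
using monotone decrease of $V(\mathbf{u}^k)$ and the lower bound $V^*\le V(\mathbf{u}^{k+1})$ guaranteed by boundedness from below. Since the minimum over $j$ is at most the average, dividing by $k$ yields the claim. The main obstacle is really the first step --- keeping track of the intermediate Itoh--Abe states $\mathbf{v}_{j-1}^k$ and the mean-value points $\xi_j^k$ so that coordinate-Lipschitz bounds can be converted cleanly into a quantity controlled by $\|\mathbf{u}^{k+1}-\mathbf{u}^k\|$; the rest is essentially bookkeeping and a telescoping sum.
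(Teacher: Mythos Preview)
Your proof is correct and follows essentially the same route as the paper's: mean value theorem on each coordinate of the Itoh--Abe sweep, coordinatewise Lipschitz bound, squaring and summing to get $\|\nabla V\|^2 \le \nu(V(\mathbf{u}^k)-V(\mathbf{u}^{k+1}))$, then telescoping. The only cosmetic difference is that the paper anchors the gradient at the \emph{new} iterate $\mathbf{u}^{k+1}$ (bounding $|\partial_l V(\mathbf{u}^{k+1})|$ via $\|\mathbf{u}^{k+1}-\mathbf{v}^k_{l-1}-s\beta_l^k\mathbf{e}_l\|$), whereas you anchor at the \emph{old} iterate $\mathbf{u}^k$; both choices yield the same bound $\|\mathbf{u}^{k+1}-\mathbf{u}^k\|$ on the displacement and hence the same constant $\nu$.
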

\begin{proof}
The coordinatewise Itoh--Abe scheme, with $\beta_l^k = u^{k+1}_l - u^k_l$, reads
\begin{align*}
\beta_l^k = - \tau_k \dfrac{V\left(\mathbf{v}^{k}_{l} \right) - V\left(\mathbf{v}^{k}_{l-1}\right)}{\beta_l^k },
\end{align*}
with $\mathbf{v}^{k}_{l} := \mathbf{u}^k + \sum_{j=1}^l \beta_j^k\mathbf{e}_{j}$ such that $\mathbf{u}^{k+1} = \mathbf{v}_n^{k}$. By the triangle inequality,
\begin{align*}
\left| \dfrac{\partial V}{\partial u_{l}}(\mathbf{u}^{k+1}) \right| &\leq \left| \dfrac{\partial V}{\partial u_{l}}(\mathbf{u}^{k+1}) - \dfrac{V(\mathbf{v}^{k}_{l} ) - V(\mathbf{v}^{k}_{l-1})}{\beta_{l}^{k} }  \right| + \dfrac{1}{\tau_k} \left| \beta_{l}^{k} \right|.
\end{align*}
Since $V \in \mathcal{C}^1$, the mean value theorem holds, meaning
\begin{align*}
\dfrac{V(\mathbf{v}^{k}_{l} ) - V(\mathbf{v}^{k}_{l-1})}{\beta_{l}^{k} } = \dfrac{\partial V}{\partial u_{l}}(\mathbf{v}^{k}_{l-1} + s\beta_{l}^{k} \mathbf{e}_{l})
\end{align*}
for some $s \in (0,1)$. Hence,
\begin{align}
\left| \dfrac{\partial V}{\partial u_{l}}(\mathbf{u}^{k+1}) \right| &\leq \left| \dfrac{\partial V}{\partial u_{l}}(\mathbf{u}^{k+1}) -  \dfrac{\partial V}{\partial u_{l}}(\mathbf{v}^{k}_{l-1} + s\beta_{l}^{k} \mathbf{e}_{l})  \right|  +  \dfrac{1}{\tau_k}\left| \beta_{l}^{k} \right|.
\label{eq:startingpoint}
\end{align}
Exploiting the coordinatewise Lipschitz continuity of the gradient, we have
\begin{align*}
\left| \dfrac{\partial V}{\partial u_{l}}(\mathbf{u}^{k+1}) \right| &\leq L_l \| \mathbf{u}^{k+1} - \mathbf{v}^{k}_{l-1} - s\beta_{l}^{k} \mathbf{e}_{l} \| + \dfrac{1}{\tau_k}\left| \beta_{l}^{k} \right|. 
\end{align*}
Squaring this and summing over all coordinates, we get
\begin{align*}
\| \nabla V(\mathbf{u}^{k+1} )\|^2  &\leq \sum \limits_{l = 1}^{n} \left(L_l \| \mathbf{u}^{k+1} - \mathbf{v}^{k}_{l-1} - s\beta_{l}^{k}\mathbf{e}_{l} \| + \dfrac{1}{\tau_k}\left| \beta_{l}^{k} \right| \right)^2 \\
& \leq 2  L_{\max}^2 \left( n + \dfrac{1}{L_{\max}^2 \tau_{\min}^2}  \right) \| \mathbf{u}^{k+1} - \mathbf{u}^{k} \|^2\\
& \leq \nu \left(V(\mathbf{u}^{k}) - V(\mathbf{u}^{k+1})\right).
\end{align*}
We then find 
\begin{align*}
k \min \limits_{1 \leq j \leq k} \left\lbrace \| \nabla V(\mathbf{u}^{j}) \|^2 \right\rbrace  \leq \sum \limits_{j=1}^{k}\| \nabla V(\mathbf{u}^{j} )\|^2 \leq \nu  \left(V(\mathbf{u}^{0}) - V(\mathbf{u}^{k})\right) \leq \nu  \left(V(\mathbf{u}^{0}) - V^{*}\right),
\end{align*}
which concludes the proof.
\end{proof}
\textbf{\textit{Remark:}} We can choose a fixed $\tau_k = \tau$ that minimizes $\nu$, yielding
\begin{align*}
\tau = \dfrac{1}{L_{\max}\sqrt{n}}, \quad \nu = 4L_{\max}\sqrt{n}.
\end{align*}
Thus, the complexity of the above bound with respect to the problem size $n$ is $\mathcal{O}(n^{1/2})$. Furthermore, we can obtain bounds of the type $\| \nabla V(\mathbf{u}^{k+1} )\|^2 \leq \nu \left(V(\mathbf{u}^{k}) - V(\mathbf{u}^{k+1})\right)$ for other discrete gradients, yielding similar convergence rates. Such bounds are shown in \cite[Lemma 5.1]{erlend2018}.

As with descent methods, the convergence rate improves with additional assumptions on $V$, in particular assuming that $V$ is convex. 
We state the following theorems, proved in \cite{erlend2018} and inspired by those in \cite{beck2013convergence}, for later reference. Similarly to the proof of \cref{theo:Convergence}, they are are based on bounding $\| \nabla V(\mathbf{u}^{k+1} )\|^2 \leq \nu \left(V(\mathbf{u}^{k}) - V(\mathbf{u}^{k+1})\right)$ and so the factor $\nu$ appears here as well.

\begin{theorem}\label{theo:ConvergenceConv}
If \cref{ass:MainAssumption} holds and $V$ is in addition convex, the iterates $\mathbf{u}^k$ produced by \cref{alg:DG-ADAPT} satisfy, with $\nu$ as in \cref{theo:Convergence},
\begin{align*}
V(\mathbf{u}^{k}) - V^* \leq \dfrac{\nu R(\mathbf{u}^0)^2 }{k + 2\nu/L }.
\end{align*}
where $V^*$ is a minimum and $R(\mathbf{u}^0)$ is the diameter of $\{\mathbf{u} \in \mathbb{R}^n | V(\mathbf{u}) \leq V(\mathbf{u^0})\}$.
\end{theorem}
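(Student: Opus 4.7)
The plan is to adapt the standard convex convergence proof for gradient descent (cf.\ Nesterov) to the Itoh--Abe setting: I will derive a one-step recursion on $a_k := V(\mathbf{u}^k) - V^*$ by combining a gradient estimate with convexity, and then telescope after passing to reciprocals. Convexity is what allows bootstrapping the $\mathcal{O}(1/\sqrt{k})$ rate of \cref{theo:Convergence} up to an $\mathcal{O}(1/k)$ rate on $a_k$ itself. The one technical ingredient I would establish first is the shifted-index variant of the gradient bound from inside the proof of \cref{theo:Convergence}, namely that with the same constant $\nu$,
\[
\|\nabla V(\mathbf{u}^k)\|^2 \leq \nu\,\bigl(V(\mathbf{u}^k) - V(\mathbf{u}^{k+1})\bigr).
\]
The derivation mirrors the one in \cref{theo:Convergence}, except that the coordinate-wise triangle inequality is applied to $\partial V/\partial u_l$ evaluated at $\mathbf{u}^k$ rather than at $\mathbf{u}^{k+1}$; since the mean-value point $\mathbf{v}^k_{l-1}+s\beta_l^k\mathbf{e}_l$ still lies on the segment from $\mathbf{u}^k$ to $\mathbf{u}^{k+1}$, its distance from $\mathbf{u}^k$ is still bounded by $\|\mathbf{u}^{k+1}-\mathbf{u}^k\|$, and the remaining steps --- squaring, summing over coordinates, and invoking dissipation $\|\mathbf{u}^{k+1}-\mathbf{u}^k\|^2 = \tau_k(a_k-a_{k+1})$ --- carry over verbatim.

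From here the rest is routine. By dissipativity of \cref{alg:DG-ADAPT} both $\mathbf{u}^k$ and $\mathbf{u}^*$ sit in the sub-level set $\{\mathbf{u}\in\mathbb{R}^n : V(\mathbf{u})\leq V(\mathbf{u}^0)\}$, so $\|\mathbf{u}^k-\mathbf{u}^*\|\leq R(\mathbf{u}^0)$; convexity then gives $a_k \leq \langle \nabla V(\mathbf{u}^k),\mathbf{u}^k-\mathbf{u}^*\rangle \leq \|\nabla V(\mathbf{u}^k)\|\,R(\mathbf{u}^0)$. Squaring and chaining with the shifted gradient estimate produces $a_k^2 \leq \nu R(\mathbf{u}^0)^2\,(a_k - a_{k+1})$, equivalently $a_{k+1} \leq a_k\bigl(1 - a_k/(\nu R(\mathbf{u}^0)^2)\bigr)$. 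Non-negativity of $a_{k+1}$ forces $a_k/(\nu R(\mathbf{u}^0)^2)\leq 1$, so the elementary inequality $1/(1-x) \geq 1+x$ for $x\in[0,1)$ yields $1/a_{k+1} \geq 1/a_k + 1/(\nu R(\mathbf{u}^0)^2)$, and telescoping over $k$ steps gives $1/a_k \geq 1/a_0 + k/(\nu R(\mathbf{u}^0)^2)$.

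To pin down the constant in the denominator, the descent lemma \eqref{eq:descentLemmaPure} applied with $\mathbf{v}=\mathbf{u}^*$ and $\nabla V(\mathbf{u}^*)=0$ gives $a_0 \leq (L/2)R(\mathbf{u}^0)^2$, hence $1/a_0 \geq 2/(L R(\mathbf{u}^0)^2)$; substituting into the telescoped inequality yields $1/a_k \geq (k + 2\nu/L)/(\nu R(\mathbf{u}^0)^2)$, and inverting gives the claim. The only point that requires real care is noticing that the \emph{shifted-index} form of the gradient estimate is essential: using the version at $\mathbf{u}^{k+1}$ from \cref{theo:Convergence} directly would only give the strictly weaker recursion $a_{k+1}^2\leq \nu R(\mathbf{u}^0)^2(a_k-a_{k+1})$, whose reciprocal analysis produces the same $\mathcal{O}(1/k)$ rate but with a suboptimal constant.
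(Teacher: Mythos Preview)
Your argument is correct and follows the standard Beck--Tetruashvili route that the paper explicitly says inspired the proof (the paper itself does not prove this theorem, deferring instead to \cite{erlend2018}). The structure---gradient bound $\Rightarrow$ convexity inequality $\Rightarrow$ quadratic recursion $\Rightarrow$ reciprocal telescoping $\Rightarrow$ descent lemma for the base term---matches what one would expect.

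One small inaccuracy in your write-up: the mean-value point $\mathbf{v}^k_{l-1}+s\beta_l^k\mathbf{e}_l$ does \emph{not} in general lie on the line segment from $\mathbf{u}^k$ to $\mathbf{u}^{k+1}$; it lies in the coordinate-aligned box with those corners. Your conclusion that its distance to $\mathbf{u}^k$ is bounded by $\|\mathbf{u}^{k+1}-\mathbf{u}^k\|$ is nonetheless correct, since the squared distance is $\sum_{j<l}(\beta_j^k)^2+s^2(\beta_l^k)^2\le\|\mathbf{u}^{k+1}-\mathbf{u}^k\|^2$.

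Your observation about the index shift is a genuine technical point. The paper describes the cited proof as resting on the bound $\|\nabla V(\mathbf{u}^{k+1})\|^2\le\nu\,(V(\mathbf{u}^k)-V(\mathbf{u}^{k+1}))$, whereas you establish and use the version at $\mathbf{u}^k$. You are right that the $k$-indexed bound yields the recursion $a_k^2\le C(a_k-a_{k+1})$, from which $1/a_{k+1}-1/a_k\ge 1/C$ follows directly and gives precisely the stated constant; the $k{+}1$-indexed bound gives only $a_{k+1}^2\le C(a_k-a_{k+1})$, whose reciprocal analysis (cf.\ \cite[Lemma~6.2]{beck2013convergence}) produces the same order but a looser leading constant. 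Whether \cite{erlend2018} actually uses the shifted form or recovers the constant by other means cannot be determined from this paper alone, but your route is the clean one.
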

The next theorem concerns the convergence rate of \cref{alg:DG-ADAPT} when $V$ is a P\L{}-function,  i.e. $V$ satisfies the Polyak-\L{}ojasiewicz inequality with parameter $\sigma$,  
\begin{align*}
\frac{1}{2}\| \nabla V(\mathbf{u}) \|^2 \leq \sigma(V(\mathbf{u}) - V^*).
\end{align*}
Note that under \cref{ass:MainAssumption}, all strongly convex functions are P\L{}-functions \cite{karimi2016linear}.
\begin{theorem}\label{theo:ConvergenceStrConv}
If \cref{ass:MainAssumption} holds and $V $ is a P\L{}-function, the iterates of \cref{alg:DG-ADAPT} satisfy, with $\nu$ as in \cref{theo:Convergence},
\begin{align*}
V(\mathbf{u}^{k}) - V^* \leq \left(1 - \dfrac{2\sigma}{\nu} \right)^k(V(\mathbf{u}^{0}) - V^*).
\end{align*}
\end{theorem}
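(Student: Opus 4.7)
The plan is to follow the template of the proof of \cref{theo:Convergence}, modifying one step so that the resulting inequality can be combined cleanly with the P\L{} inequality to produce a geometric decay of $V(\mathbf{u}^k) - V^*$.

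First, I would re-derive the key estimate of \cref{theo:Convergence} with the gradient anchored at $\mathbf{u}^k$ rather than at $\mathbf{u}^{k+1}$. Starting from
\begin{align*}
\left|\dfrac{\partial V}{\partial u_l}(\mathbf{u}^k)\right| \leq \left|\dfrac{\partial V}{\partial u_l}(\mathbf{u}^k) - \dfrac{\partial V}{\partial u_l}(\mathbf{v}^k_{l-1} + s\beta_l^k\mathbf{e}_l)\right| + \dfrac{1}{\tau_k}|\beta_l^k|,
\end{align*}
and then applying the mean value theorem followed by the coordinatewise Lipschitz continuity of $\nabla V$ exactly as in the proof of \cref{theo:Convergence}, one arrives at
\begin{align*}
\|\nabla V(\mathbf{u}^k)\|^2 \leq \nu \bigl(V(\mathbf{u}^k) - V(\mathbf{u}^{k+1})\bigr),
\end{align*}
with the same $\nu$ as in \cref{theo:Convergence}.

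The remainder is a short one-step contraction argument. Applying the P\L{} inequality in its standard orientation $\|\nabla V(\mathbf{u}^k)\|^2 \geq 2\sigma (V(\mathbf{u}^k) - V^*)$ on the left-hand side yields
\begin{align*}
2\sigma\bigl(V(\mathbf{u}^k) - V^*\bigr) \leq \nu\bigl(V(\mathbf{u}^k) - V(\mathbf{u}^{k+1})\bigr),
\end{align*}
and rearranging gives the contraction
\begin{align*}
V(\mathbf{u}^{k+1}) - V^* \leq \bigl(1 - 2\sigma/\nu\bigr)\bigl(V(\mathbf{u}^k) - V^*\bigr).
\end{align*}
A straightforward induction on $k$ then produces the stated geometric bound.

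The main technical point is verifying that the $\mathbf{u}^k$-anchored version of the key estimate comes out with exactly the same constant $\nu$ that appears in \cref{theo:Convergence}, since the proof there was written using $\mathbf{u}^{k+1}$. This is not automatic, but a direct inspection suffices: the vector $\mathbf{u}^k - \mathbf{v}^k_{l-1} - s\beta_l^k\mathbf{e}_l = -\sum_{j<l}\beta_j^k\mathbf{e}_j - s\beta_l^k\mathbf{e}_l$ has squared norm $\sum_{j<l}|\beta_j^k|^2 + s^2|\beta_l^k|^2$, which is bounded above by $\|\mathbf{u}^{k+1}-\mathbf{u}^k\|^2$, just as was the case for its $\mathbf{u}^{k+1}$-based counterpart. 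With this observation the same computation passes through unchanged, and the factor $\nu$ of \cref{theo:Convergence} is recovered intact.
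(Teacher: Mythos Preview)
Your argument is correct. The paper itself does not prove this theorem; it only states it and cites \cite{erlend2018}, remarking that the proofs there ``are based on bounding $\| \nabla V(\mathbf{u}^{k+1} )\|^2 \leq \nu \left(V(\mathbf{u}^{k}) - V(\mathbf{u}^{k+1})\right)$.'' Your decision to anchor the gradient at $\mathbf{u}^k$ rather than $\mathbf{u}^{k+1}$ is the right refinement: with the $\mathbf{u}^{k+1}$ anchor one obtains the contraction factor $\nu/(\nu+2\sigma) = 1/(1+2\sigma/\nu)$, which is strictly larger than the stated $1-2\sigma/\nu$, so only the $\mathbf{u}^k$ version delivers the theorem exactly as written. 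Your verification that the same constant $\nu$ survives the change of anchor is accurate, since $\|\mathbf{u}^k - \mathbf{v}^k_{l-1} - s\beta_l^k\mathbf{e}_l\|^2 = \sum_{j<l}|\beta_j^k|^2 + s^2|\beta_l^k|^2 \leq \|\mathbf{u}^{k+1}-\mathbf{u}^k\|^2$ just as in the original estimate. One small addendum you might include for completeness: the contraction factor $1-2\sigma/\nu$ is automatically nonnegative, because combining $\|\nabla V(\mathbf{u}^k)\|^2 \leq \nu(V(\mathbf{u}^k)-V^*)$ with the P\L{} inequality forces $2\sigma \leq \nu$ whenever $V(\mathbf{u}^k)>V^*$.
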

\textit{\textbf{Remark:}} The above theorems mean that for convex problems, too, the algorithm has a worst-case complexity of $\mathcal{O}(n^{1/2})$ with respect to the problem dimension $n$, compared to $\mathcal{O}(n^{3/2})$ for the cyclic coordinate descent algorithm \cite{wright2015coordinate} and $\mathcal{O}(n)$ for the expected bounds of stochastic coordinate descent \cite{CoordDescNesterov}.
We shall see in $\cref{lemma:MainLemmaSharper}$ that the complexity can be reduced depending on a sparsity property of $V$.

\section{The Euler's elastica problem}
We will use \cref{alg:DG} for variational image analysis with Euler's elastica regularization. 
In variational image analysis one repairs a damaged input greyscale image $g: \Omega \rightarrow [0,1]$, where $\Omega \subset \mathbb{R}^2$ is often rectangular, by finding an output image $u: \Omega \rightarrow [0,1]$ that minimizes a functional
\begin{align}
V_c(u) = d_c(K_c u,g) + \alpha J_c(u). \label{eq:contenergy}
\end{align}
Here, $K_c$ is a forward operator relating $u$ to $g$, $d_c$ a function measuring the distance between $K_cu$ and $g$, $J_c$ a regularization functional and $\alpha > 0$ a constant. The subscript $c$ emphasizes that the functions are continuous; they will later be discretized and renamed. When $J_c$ is the Euler's elastica energy below, $\alpha$ is included in $a$ and $b$. 

The forward operator $K_c$, which may be linear, is inherent to the problem. 
For example, when considering an inpainting problem where the goal is to interpolate $g$ in a subset $D$ of the image domain $\Omega$ in which there is no given data, one would take $K_c$ as a restriction to $\Omega \backslash D$. Since this leaves $K_c u$ undefined in $\Omega$, the fidelity term should only compare with values of $g$ on $\Omega \backslash D$. 
This has the effect of maintaining fidelity only in areas where the image is known, at the cost of generating an ill-posed problem due to non-unique solutions. In the denoising problem, where random noise is added to an image in unknown pixels, the usual choice is to take $K_c$ as the identity operator since there is no information about which pixels are damaged.

The terms that differentiate approaches to image analysis are the $d_c$ and $J_c$ functions, and the implementation of the $K_c$ operator if applicable. One often takes $d_c$ as an $L^p$ metric, while $J_c$ can be chosen in several ways. A popular choice is the total variation (TV) \cite{Rudin1992} regularization which, for differentiable $u$, can be stated as
\begin{align*}
J_{TV}(u) = \int \limits_{\Omega} |\nabla u| \mathrm{d}\mathbf{x}.
\end{align*}
In practice, one often wishes to work with a differentiable function after discretizing $J$, thus using a smoothed version of $J_{TV}$, with $0 < \epsilon \ll 1$, given as
\begin{align*}
J_{TV_\epsilon}(u) =  \int \limits_{\Omega} |\nabla u|_{\epsilon} \mathrm{d}\mathbf{x} = \int \limits_{\Omega} \sqrt{ \dfrac{\partial u}{\partial x}^2 + \dfrac{\partial u}{\partial y}^2 + \epsilon} \, \mathrm{d}\mathbf{x}.
\end{align*}
This is the $\mathrm{TV}_{\epsilon}$ regularizer. The Euler's elastica regularizer generalizes $J_{TV}$, adding a curvature dependent term. It is stated for $\mathcal{C}^2(\Omega)$ functions $u$ as \cite{ChanKangShen}
\begin{align*}
J_c(u) = \int \limits_{\Omega} \left( a + b\left( \nabla \cdot \dfrac{\nabla u}{|\nabla u|} \right)^2 \right)|\nabla u| \mathrm{d}\mathbf{x}, 
\end{align*}
where $a,b > 0$. We will consider the smoothed version
\begin{align}
J_{\epsilon}(u) = \int \limits_{\Omega} C(u)G(u) \mathrm{d}\mathbf{x}, \quad C(u) =  a + b\left( \nabla \cdot \dfrac{\nabla u}{|\nabla u|_{\epsilon}} \right)^2, \quad G(u) = |\nabla u|_{\epsilon}. \label{eq:EulerElasticaSmooth}
\end{align}
where $C(u)$ and $G(u)$ are smoothed curvature and gradient terms.

\subsection{Discretization}
In the following, for an image with resolution $n_x \times n_y$ we take $\Omega = [0,1]\times[0,n_y/n_x]$, such that each pixel represents the value over an $h\times h$ area, where $h = 1/n_x$.
Thus, with a discrete input image $\mathbf{g}$ indexed as $g_{ij}$ at points $\mathbf{x}_{ij} = (ih,jh)$ we must discretize \eqref{eq:contenergy} as
\begin{align}
V(\mathbf{u}) = d(K\mathbf{u},\mathbf{g}) + \alpha J(\mathbf{u}),
\label{eq:discenergy}
\end{align}
where $K: \mathbb{R}^n \rightarrow \mathbb{R}^n$ is a discretization of $K_c$, $\mathbf{u}$ is the output image indexed as $u_{ij}$, and $d$ and $J$ are discretizations of $d_c$ and $J_c$. If $d_c$ is an $L^p$ norm, with $K_c$ a restriction to $\Omega \backslash D$, we discretize it as
\begin{align*}
\left( \int \limits_{\Omega \backslash D} |K_cu - g|^p \mathrm{d} \mathbf{x} \right)^{1/p}
 \approx  \left(h^2\sum_{(i,j) \in \Omega \backslash D}| (K \mathbf{u})_{ij} - g_{ij} |^p \right)^{1/p} =: d(K \mathbf{u}, \mathbf{g}).
\end{align*}
If $J_c$ is on integral form, one can use quadrature to discretize it as
\begin{align*}
J_c(u) = \int \limits_{\Omega} H(u) \mathrm{d}\mathbf{x}
& \approx  h^2\sum_{i,j} H(u) \big|_{\mathbf{x}_{ij}}.
\end{align*}
Since it requires derivatives of $u$, $H(u) = C(u)G(u)$ in  \eqref{eq:EulerElasticaSmooth} must be approximated at the points $\mathbf{x}_{ij}$ by values $H_{ij}(\mathbf{u})$, such that the final discretization becomes
\begin{align*}
J_c(u)   \approx  h^2\sum_{i,j} H_{ij}(\mathbf{u}).
\end{align*} 
For this approximation we use finite differences on a staggered grid as in \cite{ChanKangShen} and \cite{Tai2011}. 
The stencil used for discretizing both $G(u)$ and $C(u)$ is shown in \cref{stencil}. 
The $u_{ij}$ are shown as green squares, and $u_x$ and $u_y$ are approximated by finite differences at red and blue points. With these, we approximate $G(u)$ and $C(u)$.
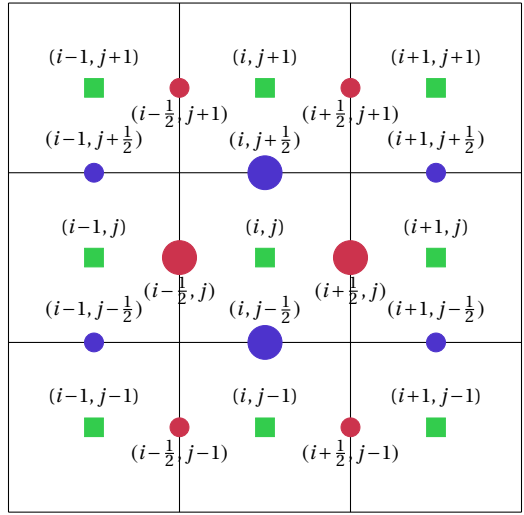
\begin{figure}[ht]
\begin{center}
\begin{tikzpicture}[scale = 0.75]

 \draw (-4.5,-4.5) -- (-4.5,4.5);
  \draw (-1.5,-4.5) -- (-1.5,4.5);
  \draw (1.5,-4.5) -- (1.5,4.5);
  \draw (4.5,-4.5) -- (4.5,4.5);
  \draw (-4.5,-4.5) -- (4.5,-4.5);
  \draw (-4.5,-1.5) -- (4.5,-1.5);
  \draw (-4.5,1.5) -- (4.5,1.5);
  \draw (-4.5,4.5) -- (4.5,4.5);
  
  \stencilpt[bluenode]{-3,1.5}{y1}{{\scriptsize ($i\!-\!1,j \!+\! \frac{1}{2}$)}};
  \bigstencilpt[bluenode]{0,1.5}{y2}{\scriptsize($i , j\! + \!\frac{1}{2}$)};
  \stencilpt[bluenode]{3,1.5}{y3}{\scriptsize($i \!+ \!1, j \!+\! \frac{1}{2}$)};
  \stencilpt[bluenode]{-3,-1.5}{y4}{\scriptsize($i \!- \!1, j \!- \!\frac{1}{2}$)};
  \bigstencilpt[bluenode]{0,-1.5}{y5}{\scriptsize($i,j \!- \!\frac{1}{2}$)};
  \stencilpt[bluenode]{3,-1.5}{y6}{\scriptsize($i\! +\! 1, j\! -\! \frac{1}{2}$)};
  \stencilptund[rednode]{ -1.5,3}{x1}{\scriptsize($i \! - \!\frac{1}{2}, j\! +\!1$)};
  \stencilptund[rednode]{ 1.5,3}{x2}{\scriptsize($i\! + \!\frac{1}{2},j\! + \!1$)};
  \bigstencilptund[rednode]{ -1.5,0}{x3}{\scriptsize($i\! -\! \frac{1}{2},j$)};
  \bigstencilptund[rednode]{ 1.5,0}{x4}{\scriptsize($i\! +\! \frac{1}{2},j$)};
  \stencilptund[rednode]{ -1.5,-3}{x5}{\scriptsize($i\! - \!\frac{1}{2},j\!- \!1$)};
  \stencilptund[rednode]{ 1.5,-3}{x6}{\scriptsize($i\! + \!\frac{1}{2},j\!- \!1$)};
  
  \stencilsq[greennode]{ 0,0}{0}{\scriptsize $(i,j)$};  
  \stencilsq[greennode]{ -3,0}{1}{\scriptsize $(i\!-\!1,j)$};  
  \stencilsq[greennode]{ -3,3}{2}{\scriptsize $(i\!-\!1,j\!+\!1)$};  
  \stencilsq[greennode]{ 0,3}{3}{\scriptsize $(i,j\!+\!1)$};  
  \stencilsq[greennode]{ 3,3}{4}{\scriptsize $(i\!+\!1,j\!+\!1)$}; 
  \stencilsq[greennode]{ 3,0}{5}{\scriptsize $(i\!+\!1,j)$};  
  \stencilsq[greennode]{ 3,-3}{6}{\scriptsize $(i\!+\!1,j\!-\!1)$};  
  \stencilsq[greennode]{ 0,-3}{7}{\scriptsize $(i,j\!-\!1)$};  
  \stencilsq[greennode]{ -3,-3}{8}{\scriptsize $(i\!-\!1,j\!-\!1)$};  
  
\end{tikzpicture}
\caption{Discretization stencil. Green squares: Pixel data $u_{ij}$. All red/blue circles: approximations of $u_x $ and $ u_y$. Large red/blue circles: approximations of $x$ and $y$ components of $\nabla u /|\nabla u|_{\epsilon}$.}
\label{stencil}
\end{center}
\end{figure}
Following the standard approach for $\mathrm{TV}_{\epsilon}$ regularization, $G(u)$ is approximated by backward differences. 
Approximating $C(u)$ requires evaluation of the $x$ and $y$ components of $\frac{\nabla u}{|\nabla u|_{\epsilon}}$ at the large dots (red for the $x$ component, blue for the $y$ component) and taking central differences of these to approximate the divergence. 
To evaluate $|\nabla u|_{\epsilon}$, we approximate values for $u_y$ at the large red dots and $u_x$ at the large blue dots by the mean of the $u_y$ and $u_x$ approximations at the four nearest blue and red points, respectively. In total, the discretized regularizer is
\begin{align}
J(\mathbf{u}) = \sum \limits_{i=1}^{n_x} \sum \limits_{j=1}^{n_y} \left( a + b \left( \delta_{x}^+ \dfrac{\delta_{x}^- u_{ij}}{w_{i-\frac{1}{2},j}} + \delta_{y}^+ \dfrac{\delta_{y}^- u _{ij}}{w_{i,j-\frac{1}{2}}} \right)^2 \right)G_{ij}.
\label{eq:ElasticaRegDisc}
\end{align}
Here, $\delta_x^+$, $\delta_x^-$, $\delta_y^+$, and $\delta_y^-$ denote forward/backward differences in $x$ and $y$ directions,
\begin{align*}
\delta_x^+ f_{ij} = (f_{i+1,j} - f_{ij})/h, \qquad  \delta_x^- f_{ij} = (f_{ij} - f_{i-1,j})/h,\\
\delta_y^+ f_{ij} = (f_{i,j+1} - f_{ij})/h, \qquad  \delta_y^- f_{ij} = (f_{ij} - f_{i,j-1})/h,
\end{align*} 
and the discretization of the $|\nabla u|_\epsilon$ terms depends on the point as
\begin{align*}
G_{ij} = \sqrt{(\delta_x^- u_{ij})^2 + (\delta_y^- u_{ij})^2 + \epsilon},\\
w_{i-\frac{1}{2},j} = \sqrt{(\delta_x^- u_{ij})^2 + (\delta_y^* u_{ij})^2 + \epsilon},\\
w_{i,j-\frac{1}{2}} = \sqrt{(\delta_x^* u_{ij})^2 + (\delta_y^- u_{ij})^2 + \epsilon},
\end{align*}
where
\begin{align*}
\delta_x^* u_{ij} = \dfrac{1}{4}(\delta_x^-u_{i+1,j} + \delta_x^-u_{ij} + \delta_x^-u_{i+1,j-1} + \delta_x^-u_{i,j-1})\\
\delta_y^* u_{ij} = \dfrac{1}{4}(\delta_y^-u_{i,j+1} + \delta_y^-u_{ij} + \delta_y^-u_{i-1,j} + \delta_y^-u_{i-1,j+1}).
\end{align*}
The discrete energy \eqref{eq:ElasticaRegDisc} has a Lipschitz continuous gradient, where the Lipschitz constant depends on $\epsilon$. Thus, any energy of the form  \eqref{eq:discenergy} using \eqref{eq:ElasticaRegDisc} as a regularizer will satisfy \cref{theo:Convergence} when the fidelity term has a Lipschitz continuous gradient.

\begin{figure}[ht]
            \centering
            \includegraphics[width=0.4\linewidth]{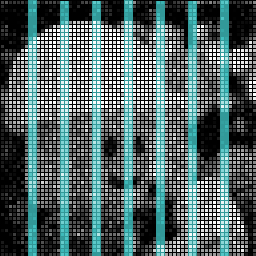}
        \caption{Image split into $M = 8$ and $N = 7$ index sets $\{B_m \}_{m = 1}^8$ and $\{\Gamma_l \}_{l = 1}^7$ (marked cyan).}
        \label{fig:parallelization}
\end{figure}
\subsection{Decoupling and parallelization}
\cref{alg:DG,alg:DG-ADAPT} follow a cyclic ordering with elements updated columnwise, but \cref{theo:Convergence,theo:ConvergenceConv,theo:ConvergenceStrConv} make no assumptions on element ordering, so the convergence rates are unaffected by reordering updates. 
Also, \cref{stencil} indicates that updating $u_{ij}$ will only affect the $H_{\mu \nu}(\mathbf{u})$ with $(\mu,\nu)$ immediately surrounding $(i,j)$. 
Hence, if we split the image in $M + N$ parts as shown in \cref{fig:parallelization} and sweep through the cyan elements first, the blocks separated by cyan pixels can be updated independently of each other and thus in parallel, a domain decomposition strategy similar to that in \cite{xie1999new}.
\begin{algorithm}[DG-PARALLEL]
\begin{algorithmic}
\label{alg:DG-PARALLEL}
\STATE{}
\STATE{$\text{Choose } \tau > 0, \, tol > 0 \text{ and } \mathbf{u}^0 \in \mathbb{R}^n. \text{ Set } k = 0. $ Initialize $M$ threads.}
\STATE{Define $M$ index sets $B_m$ and $N$ index sets $\Gamma_l$.}
\REPEAT 
\STATE{$\mathbf{parallel:} \, N \text{ threads. Thread number } l \text{ does:}$}
\STATE{$\mathbf{v}_0^{k,l} = \mathbf{u}^k$}
\FOR{$j \in \Gamma_l$}
\STATE{Solve $\beta_j^{k}  = -\tau (V(\mathbf{v}_{j-1}^{k,l} + \beta_j^{k} \mathbf{e}_j) - V(\mathbf{v}_{j-1}^{k,l}))/\beta_j^{k}$}
\STATE{$\mathbf{v}_j^{k,l} = \mathbf{v}_{j-1}^{k,l} + \beta_j^{k} \mathbf{e}_j$}
\ENDFOR
\STATE{$\textbf{Reduce: }\, \mathbf{v}_0^{k} = \mathbf{u}^k + \sum \limits_{j \in \cup_{l=1}^{N} \Gamma_l} \beta^k_j \mathbf{e}_j$}
\STATE{$\mathbf{Parallel: } \, M \text{ threads. Thread number } m \text{ does:}$}
\STATE{$\mathbf{v}_0^{k,m} = \mathbf{v}_0^k$}
\FOR{$j \in B_m$}
\STATE{Solve $\beta_j^k  = -\tau(V(\mathbf{v}_{j-1}^{k,m} + \beta_j^k \mathbf{e}_j) - V(\mathbf{v}_{j-1}^{k,m}))/\beta_j^k$}
\STATE{$\mathbf{v}_j^{k,m} = \mathbf{v}_{j-1}^{k,m} + \beta_j^k \mathbf{e}_j$}
\ENDFOR
\STATE{$\textbf{Reduce: }\, \mathbf{u}^{k+1} = \mathbf{v}_0^k + \sum \limits_{j \in \cup_{m=1}^M B_m} \beta^k_j \mathbf{e}_j $}
\STATE{$k = k + 1$}
\UNTIL{$\left(V(\mathbf{u}^k) - V(\mathbf{u}^{k-1})\right)/V(\mathbf{u}^0) < tol$}
\end{algorithmic}
\end{algorithm}
This inspires \cref{alg:DG-PARALLEL}, a parallel version of \cref{alg:DG} where the indices of the unknowns are divided into two collections of index sets, $\{B_m\}_{m=1}^M$ and $\{\Gamma_l\}_{l = 1}^N$, based on the dependency radius of $V$, defined below. Note that the acceleration procedure  proposed in \cref{alg:DG-ADAPT} still works here. For a rigorous discussion, we first introduce a distance measure between index sets. Define the distance between two index pairs $(i,j)$ and $(k,l)$ by
\begin{align*}
\mathrm{dist}_{\text{ind}}\left((i,j),(k,l)\right) = \max\left\lbrace|i-k|,|j-l| \right \rbrace,
\end{align*}
which is the graph distance when every index has edges to the closest indices vertically, horizontally and diagonally. We define the distance between two index sets as
\begin{align*}
\mathrm{dist}_{\text{set}}(I_m,I_n) = \min \limits_{\substack{(i,j) \in I_m\\ (k,l) \in I_n}}\mathrm{dist}_{\text{ind}}\left((i,j),(k,l)\right). 
\end{align*}
If $\mathrm{dist}_{\text{set}}(I_m,I_n) = 0$, then $I_m$ and $I_n$ share at least one index; if $\mathrm{dist}_{\text{set}}(I_m,I_n) = 1$, at least one index in $I_m$ is adjacent to an index in $I_n$, horizontally, vertically or diagonally; if $\mathrm{dist}_{\text{set}}(I_m,I_n) = 2$, there is a band of width 1 of indices separating $I_m$ and $I_n$, et cetera. We can now define the dependency radius of a function; we say that $V: \mathbb{R}^n \rightarrow \mathbb{R}$ has dependency radius $R$ if for all $\delta \in \mathbb{R}$ and $(i,j)$,
\begin{align*}
V(\mathbf{u} + (\delta - u_{ij})\mathbf{e}_{ij}) - V(\mathbf{u}) = F(\mathbf{u}_{ij}^R(\mathbf{u}),\delta),
\end{align*}  
where $F: \mathbb{R}^{(2R+1)^2 + 1} \rightarrow \mathbb{R}$ is a function depending on $\delta$ and
\begin{align*}
\mathbf{u}_{ij}^R(\mathbf{u}) = 
(u_{i_{\min},j_{\min}},...,
u_{ij},...,
u_{i_{\max},j_{\max}}),
\end{align*}
where 
\begin{align*}
i_{\min} &= \max\{i - R,1\},   & j_{\min} &= \max\{j - R,1\}&\\ 
i_{\max} &= \min \{i+R, n_x\}, & j_{\max} &= \min \{j+R, n_y\}.&
\end{align*} 
This means that computing the change in $V$ from updating unknown number $(i,j)$ requires only the unknowns with indices within a distance of $R$. In the discretized Euler's elastica problem we have $R = 1$. If $V$ has dependence radius $R$, then $u_{ij}$ can be updated using the Itoh-Abe discrete gradient independently of $u_{kl}$ if $\mathrm{dist}_{\mathrm{ind}}((i,j),(k,l)) > R$.
We can decouple a problem with dependency radius $R$ by choosing $M$ index sets $B_m \subset \Omega$ such that $\mathrm{dist}_{\text{set}}(B_m,B_n) > R$ for all $(m,n)$. Then, one chooses a second collection of $N$ index sets $\Gamma_l$ such that $\mathrm{dist}_{\text{set}}(\Gamma_k,\Gamma_l) > R$ and $\cup_{l=1}^N \Gamma_l = \Omega / \cup_{m=1}^M B_m$. 
Note that in general, $M \neq N$ and that while this discussion has been focused on two-dimensional indexing, generalizing $\mathrm{dist}_{\text{ind}}$ and $\mathrm{dist}_{\text{set}}$ in the obvious manner to higher-dimensional index pairs admits a similar approach in arbitrary indexing dimensions.

\subsection{Effect of dependency radius on complexity of the algorithm} A consequence of $V$ having dependency radius $R$ is that $\partial V / \partial u_{ij}$ depends on $\mathbf{u}_{ij}^R$ only. 
This can be used to obtain sharper versions of \cref{theo:Convergence,theo:ConvergenceConv,theo:ConvergenceStrConv} through a property presented in the following lemma for two-dimensional indexing. 
\begin{lemma} \label{lemma:MainLemmaSharper}
If \cref{ass:MainAssumption} holds and in addition $V$ has dependency radius $R$, the $\mathbf{u}^k$ produced by \cref{alg:DG-ADAPT} satisfy
\begin{align}
\| \nabla V(\mathbf{u}^{k})\|^2 \leq \nu \left(V(\mathbf{u}^{k}) - V(\mathbf{u}^{k+1})\right), 
\end{align}
with 
\begin{align*}
\nu = 2  L_{\max}^2 \left( (2R+1)^2 \tau_{\max} + \dfrac{\tau_{\max} }{L_{\max}^2 \tau_{\min}^2}  \right).
\end{align*}
\end{lemma}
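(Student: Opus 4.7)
The plan is to retrace the proof of \cref{theo:Convergence} essentially verbatim, and to localize every estimate in which the factor $n$ appears: that factor arose only because we bounded $\|\mathbf{u}^{k+1}-\mathbf{v}^k_{l-1}-s\beta^k_l\mathbf{e}_l\|^2$ by $\|\mathbf{u}^{k+1}-\mathbf{u}^k\|^2$ for each of the $n$ summands. The whole point is that, under the dependency-radius hypothesis, only the coordinates inside the $R$-neighborhood of index $l$ are visible to $\partial V/\partial u_l$, which both shrinks the relevant norm per summand and lets a double-counting argument replace $n$ by $(2R+1)^2$.

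First, I would convert the dependency-radius hypothesis into a statement about $\nabla V$. Differentiating the defining identity
$V(\mathbf{u}+(\delta-u_{ij})\mathbf{e}_{ij})-V(\mathbf{u})=F(\mathbf{u}^R_{ij}(\mathbf{u}),\delta)$ in $\delta$ shows that $\partial V/\partial u_{ij}(\mathbf{u})$ depends only on $\mathbf{u}^R_{ij}(\mathbf{u})$. Consequently, for any $\mathbf{x},\mathbf{y}\in\mathbb{R}^n$, if we define $\tilde{\mathbf{y}}$ by replacing the coordinates of $\mathbf{y}$ outside the $R$-neighborhood of $l$ with the corresponding coordinates of $\mathbf{x}$, then $\partial V/\partial u_l(\mathbf{y})=\partial V/\partial u_l(\tilde{\mathbf{y}})$ and $\|\mathbf{x}-\tilde{\mathbf{y}}\|=\|\mathbf{x}^R_l-\mathbf{y}^R_l\|$. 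Coordinate Lipschitz continuity of $\nabla V$ therefore upgrades to
\begin{align*}
\left|\tfrac{\partial V}{\partial u_l}(\mathbf{x})-\tfrac{\partial V}{\partial u_l}(\mathbf{y})\right|\le L_l\,\|\mathbf{x}^R_l-\mathbf{y}^R_l\|.
\end{align*}

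Second, I would start from \eqref{eq:startingpoint} exactly as in the proof of \cref{theo:Convergence} and apply this sharpened coordinate Lipschitz bound in place of the ordinary one, obtaining
\begin{align*}
\left|\tfrac{\partial V}{\partial u_l}(\mathbf{u}^{k+1})\right|\le L_l\,\|(\mathbf{u}^{k+1}-\mathbf{v}^k_{l-1}-s\beta^k_l\mathbf{e}_l)^R_l\|+\tfrac{1}{\tau_k}|\beta^k_l|.
\end{align*}
Squaring and using $(a+b)^2\le 2a^2+2b^2$ gives a bound summable over $l$. For the first piece, note that $(2R+1)^2$ is the maximal number of indices $l$ whose $R$-neighborhood contains a given coordinate $j$; hence by double counting,
\begin{align*}
\sum_{l=1}^n\|(\mathbf{u}^{k+1}-\mathbf{v}^k_{l-1}-s\beta^k_l\mathbf{e}_l)^R_l\|^2\le(2R+1)^2\,\|\mathbf{u}^{k+1}-\mathbf{u}^k\|^2,
\end{align*}
while the second piece contributes $\sum_l|\beta^k_l|^2/\tau_k^2\le\|\mathbf{u}^{k+1}-\mathbf{u}^k\|^2/\tau_{\min}^2$. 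Combining yields $\|\nabla V(\mathbf{u}^{k+1})\|^2\le 2L_{\max}^2\!\left((2R+1)^2+\tfrac{1}{L_{\max}^2\tau_{\min}^2}\right)\|\mathbf{u}^{k+1}-\mathbf{u}^k\|^2$. A single application of the dissipation identity \eqref{eq:DGdissip} (together with $\tau_k\le\tau_{\max}$) converts the RHS into $\nu(V(\mathbf{u}^k)-V(\mathbf{u}^{k+1}))$ with $\nu$ as stated.

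The only genuinely nontrivial step is the first one — turning the functional dependency-radius condition into a pointwise Lipschitz estimate on the partial derivative using only coordinates in the $R$-neighborhood. Everything else is a careful bookkeeping of the estimates in the proof of \cref{theo:Convergence}, with the combinatorial replacement of $n$ by $(2R+1)^2$ done once at the double-counting step.
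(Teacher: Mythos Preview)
Your proof is correct and follows essentially the same route as the paper: both start from \eqref{eq:startingpoint}, replace the global coordinate Lipschitz bound by one restricted to the $R$-neighborhood (using that $\partial V/\partial u_l$ depends only on $\mathbf{u}^R_l$), and then sum so that each $|\beta^k_j|^2$ is counted at most $(2R+1)^2$ times before invoking \eqref{eq:DGdissip}. Your explicit articulation of the dependency-radius $\Rightarrow$ local-Lipschitz step and the double-counting argument is slightly more detailed than the paper's, but the argument is the same.
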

\begin{proof}
Recall the coordinatewise formulation of the Itoh--Abe scheme, with 2D indexing where, with $\beta_{lm}^k = u_{lm}^{k+1} - u_{lm}^{k}$,
\begin{align*}
 \beta^k_{lm} =  - \tau_k \dfrac{V\left(\mathbf{v}^{k}_{l,m}\right) - V\left(\mathbf{v}^{k}_{l,m-1}\right)}{\beta_{lm}^{k} }. 
\end{align*}
Here,  $\mathbf{v}^{k}_{l,m} := \mathbf{u}^k + \sum_{i=1}^{l-1} \sum_{j=1}^{n_y} \beta_{ij}^{k} \mathbf{e}_{ij} + \sum_{j=1}^m \beta_{lj}^{k} \mathbf{e}_{lj}$.  We follow the proof of \cref{theo:Convergence} up to \eqref{eq:startingpoint}, where we exploit the dependence radius $R$ of $V$. For an $s \in (0,1)$, we have
\begin{align*}
\left| \dfrac{\partial V}{\partial u_{lm}}(\mathbf{u}^{k+1}) \right|&= \left| \dfrac{\partial V}{\partial u_{lm}}(\mathbf{u}^R_{lm}(\mathbf{u}^{k+1})) \right|\\
 &\leq \left| \dfrac{\partial V}{\partial u_{lm}}(\mathbf{u}^R_{lm}(\mathbf{u}^{k+1})) -  \dfrac{\partial V}{\partial u_{lm}}(\mathbf{u}^R_{lm}(\mathbf{v}^{k}_{l,m-1}) + s\beta_{lm}^{k} \mathbf{e}_{lm})  \right|  +  \dfrac{1}{\tau_k}\left| \beta_{lm}^{k} \right|.
\end{align*}
Using coordinatewise Lipschitz continuity, we have
\begin{align*}
\left| \dfrac{\partial V}{\partial u_{lm}}(\mathbf{u}^{k+1}) \right| &\leq L_{lm} \| \mathbf{u}^R_{lm}(\mathbf{u}^{k+1}) - \mathbf{u}^R_{lm}(\mathbf{v}^{k}_{l,m-1}) - s\beta_{lm}^{k} \mathbf{e}_{lm} \| + \dfrac{1}{\tau_k}\left|\beta_{lm}^{k} \right|, 
\end{align*}
and summing up over all coordinates, we get
\begin{align*}
\| \nabla V(\mathbf{u}^{k+1} )\|^2  &\leq \sum_{l= 1}^{n_x} \sum_{m= 1}^{n_y}  \left(L_{lm} \| \mathbf{u}^R_{lm}(\mathbf{u}^{k+1}) - \mathbf{u}^R_{lm}(\mathbf{v}^{k}_{l,m-1}) - s\beta_{lm}^{k}\mathbf{e}_{lm} \| + \dfrac{1}{\tau_k}\left|\beta_{lm}^{k} \right| \right)^2 \\
&\leq 2\sum_{l= 1}^{n_x} \sum_{m= 1}^{n_y} \left(  L_{lm}^2 \left( \sum \limits_{i = l_{\min}}^{l_{\max}}\sum \limits_{j = m_{\min}}^{m_{\max}}{\beta_{ij}^k}^2 \right) + \dfrac{1}{\tau_k^2}\left| \beta_{lm}^{k} \right|^2\right) \\
& \leq 2  L_{\max}^2 \left( (2R+1)^2 + \dfrac{1}{L_{\max}^2 \tau_{\min}^2}  \right) \| \mathbf{u}^{k+1} - \mathbf{u}^{k} \|^2.
\end{align*}
Since $\| \mathbf{u}^{k+1} - \mathbf{u}^{k} \|^2 = \tau_k (V(\mathbf{u}^{k}) - V(\mathbf{u}^{k+1}))$, this concludes the proof.
\end{proof}

\textbf{\textit{Remark:}} This improved estimate affects the complexity of \cref{theo:Convergence,theo:ConvergenceConv,theo:ConvergenceStrConv}, reducing it from a worst-case of $\mathcal{O}(n^{1/2})$ to $\mathcal{O}(R)$ for convex two-dimensionally indexed problems with optimal time steps. Indeed, choosing a constant step size $\tau$ minimizing $\nu$, one obtains
\begin{align*}
\tau = \dfrac{1}{(2R+1) L_{\max} }, \qquad \nu = 4(2R+1)L_{\max}.
\end{align*}
Note that for problems involving discretizations such as the Euler's elastica regularization considered here, $L_{\max}$ may still depend on $n$. 
Regardless, this is an improvement over the $\mathcal{O}(n L_{\max})$ or worse complexity of other coordinate descent methods detailed in \cite{wright2015coordinate}.
For general $D$-dimensional indexing one can expect the complexity to scale as $\mathcal{O}(R^{D/2})$ with optimal step size selection. 
\section{Numerical experiments}
\label{sect:Numerical}

In this section, we first apply Euler's elastica regularization to denoising problems and to image inpainting. The results are compared to those of $\mathrm{TV}_{\epsilon}$ regularization to verify the qualitative improvement of Euler's elastica regularization with \cref{alg:DG} over TV. This is not intended as an account on the competitiveness of Euler's elastica against other regularising procedures in general 
but serves as a proof of concept for the qualitative and algorithmic performance of the discrete gradient approach for Euler elastica when compared to discrete gradient schemes for TV. We further investigate the convergence rate numerically, with varying smoothing constant $\epsilon$ and ordering of the unknowns.
We also verify the predictions of \cref{lemma:MainLemmaSharper}. 
Next, we compare the execution time to another state-of-the-art algorithm for non-convex optimization, the iPiano algorithm \cite{Ochs2014}, and to the gradient descent and Heavy-ball algorithms. Finally, we evaluate the algorithm's sensitivity to the initial guess.

All algorithms were implemented as hybrid MATLAB and C functions using the MATLAB EXecutable (MEX) interface, where critical parts of the code are implemented in C. The tests were executed using MATLAB (2017a release) running on a Mid 2014 MacBook Pro with a four-core 2.5 GHz Intel Core i7 processor and 16 GB of 1600 MHz DDR3 RAM. For the Brent-Dekker algorithm implementation we used the built-in MATLAB function $\mathtt{fzero}$, and block parallelization was done using MATLAB's $\mathtt{blockproc}$ and $\mathtt{parfor}$ functions.

\subsection{Image denoising}
We first consider denoising images. The typical choice of fidelity term is an $L^p$ metric where $p$ depends on the type of noise encountered. The discretized forward operator $K$ is the identity operator. We wish to minimize
\begin{align}
V(\mathbf{u}) = \sum \limits_{i,j}  |u_{ij} - g_{ij}|^p +  J(\mathbf{u}). \label{eq:denoise}
\end{align}

In the first example we have added Gaussian noise with a standard deviation of 0.2, using $p = 2$ for the fidelity term. In the second example we have added impulse noise, randomly setting the values of 25\% of the pixels to either 0 or 1 as depicted in the top pictures in \cref{fig:Louvre}. Impulse noise is removed using $p = 1$ in \eqref{eq:denoise}. Note that the fidelity term with $p = 1$ is non-differentiable and falls outside of the theoretical basis of \cref{sect:theory}. 
However, results in \cite{riis2018geometric} show that the Itoh--Abe scheme will converge to a stationary point when using $p = 1$ in \eqref{eq:denoise}. Therefore, this example is included to see how the method behaves beyond the smooth setting.

\begin{figure}[ht!]
                \centering
        \begin{minipage}{.34\textwidth}
            \centering
            \includegraphics[width=.98\linewidth]{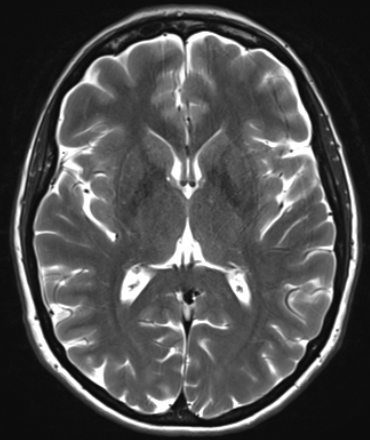}
        \end{minipage}%
        \begin{minipage}{.34\textwidth}
            \centering
            \includegraphics[width=.98\linewidth]{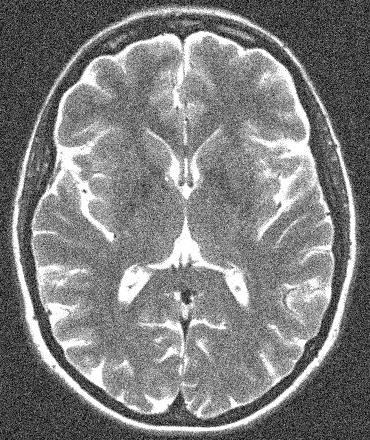}
        \end{minipage}
        \\
        \vspace{3pt}
                \centering
        \begin{minipage}{.34\textwidth}
            \centering
            \includegraphics[width=0.98\linewidth]{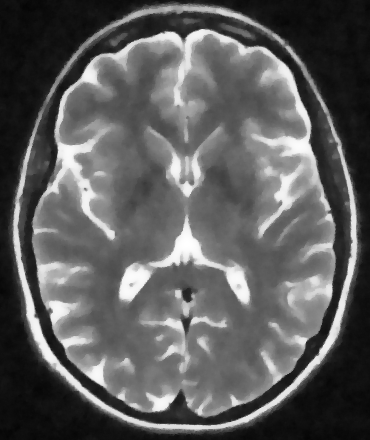}
        \end{minipage}%
        \begin{minipage}{.34\textwidth}
            \centering
            \includegraphics[width=.98\linewidth]{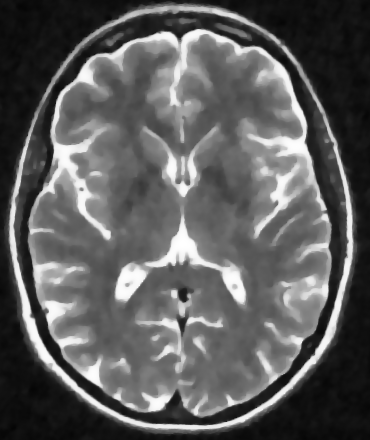}
        \end{minipage}
        \caption{Denoising with $p = 2$. Top left: original image. Top right: noisy input image $g$. Bottom left: $\mathrm{TV}_{\epsilon}$ denoised. PSNR: 20.9421, SSIM: 0.8398. Bottom right: Elastica denoised. PSNR: 21.3760, SSIM: 0.8595.}
        \label{fig:MRI}
\end{figure}

\cref{fig:MRI} shows Euler's elastica denoising with $p = 2$ applied to an image corrupted by Gaussian noise in its lower right hand panel and a $\mathrm{TV}_{\epsilon}$ regularized version in the lower left hand panel. For both $\mathrm{TV}_{\epsilon}$ and elastica denoising, we chose $\epsilon = 10^{-4}$; larger values resulted in blurring and lower values showed no visible improvement but slower convergence. For $\mathrm{TV}_{\epsilon}$ denoising, we set $a = 0.17$, and for elastica denosing, we chose $a = 0.9$ and $b = 0.9$. These values were chosen to maximize PSNR and SSIM.

In the lower right hand panel of \cref{fig:Louvre}, the result of elastica denoising with $p = 1$ on a picture of the Louvre, corrupted by impulse noise, is shown. A $\mathrm{TV}_{\epsilon}$ regularized version is seen in the lower left hand panel. 
As above, we chose $\epsilon = 10^{-4}$ for both $\mathrm{TV}_{\epsilon}$ and elastica denoising. In the $\mathrm{TV}_{\epsilon}$ case, we set $a = 0.8$, and in the elastica case, we chose $a = 0.4$ and $b = 0.2$ to maximize PSNR and SSIM.

\begin{figure}[ht]
                \centering
        \begin{minipage}{.46\textwidth}
            \centering
            \includegraphics[width=.98\linewidth]{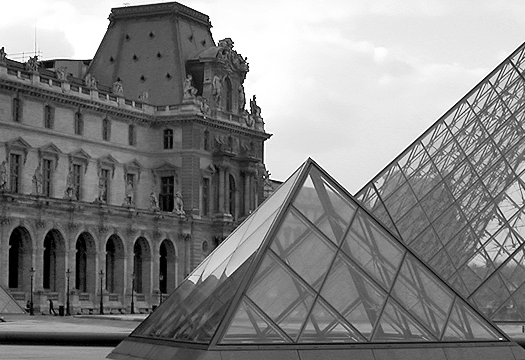}
        \end{minipage}%
        \begin{minipage}{.46\textwidth}
            \centering
            \includegraphics[width=.98\linewidth]{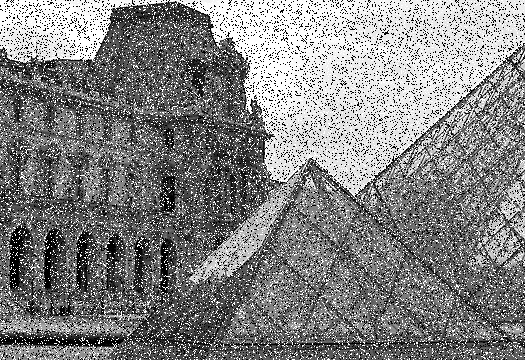}
        \end{minipage}
        \\
        \vspace{3pt}
                \centering
        \begin{minipage}{.46\textwidth}
            \centering
            \includegraphics[width=0.98\linewidth]{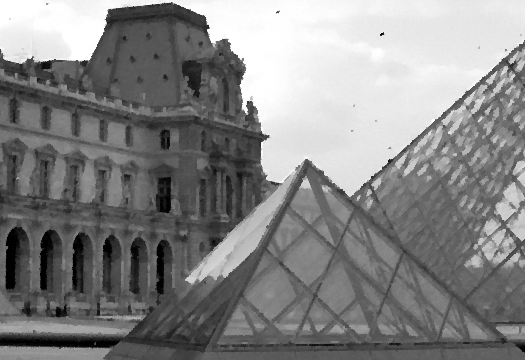}
        \end{minipage}%
        \begin{minipage}{.46\textwidth}
            \centering
            \includegraphics[width=.98\linewidth]{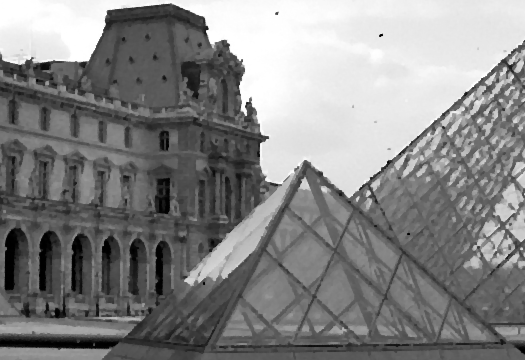}
        \end{minipage}
        \caption{Denoising with $p = 1$. Left: $\mathrm{TV}_{\epsilon}$ denoised. PSNR: 25.7287, SSIM: 0.8572. Right: Elastica denoised. PSNR: 25.9611, SSIM: 0.8628.}
        \label{fig:Louvre}
\end{figure}

As can be seen in both examples, the results of Euler's elastica denoising are slightly more visually appealing than the $\mathrm{TV}_{\epsilon}$ denoised versions, which is as expected since Euler's elastica generalizes TV regularization. 
In \cref{fig:MRI}, edges are sharper and the contrast level better in the elastica denoised image. 
In \cref{fig:Louvre} the pyramids' lines are sharper and the museum's fa\c{c}ade details are clearer. One can also see that the textures are smoother and that edges are less jagged in the elastica reconstruction.

\subsection{Image inpainting}
Inpainting is used when there is data loss in known pixels. Using $p = 2$ and a discretized restriction operator $K$ we wish to minimize
\begin{align*}
V(\mathbf{u}) = \sum \limits_{(i,j) \in \Omega \backslash D}  (u_{ij} - g_{ij})^2 +  J(\mathbf{u}),
\end{align*}
where $D \subset \Omega$ is the damaged domain. \cref{fig:Flower} shows the result of applying \cref{alg:DG} to an example inpainting problem. Here, the top left panel shows the original image, the top right panel the image with 95\% of the pixels removed randomly, the bottom left panel a $\mathrm{TV}_{\epsilon}$ inpainted image, and the bottom right panel an Euler's elastica inpainted image. For both $\mathrm{TV}_{\epsilon}$ and elastica, we chose $\epsilon = 10^{-4}$. In the $\mathrm{TV}_{\epsilon}$ case, we set $a = 2.5 \cdot 10^{-7}$, and in the elastica case, we chose $a = 10^{-6}$ and $ b = 10^{-5}$. These values were chosen to maximize SSIM. Here, the superiority of Euler's elastica is evident, as more details are reconstructed and the image appears sharper than with $\mathrm{TV}_{\epsilon}$ regularization.
\begin{figure}[ht]
                \centering
        \begin{minipage}{.43\textwidth}
            \centering
            \includegraphics[width=0.98\linewidth]{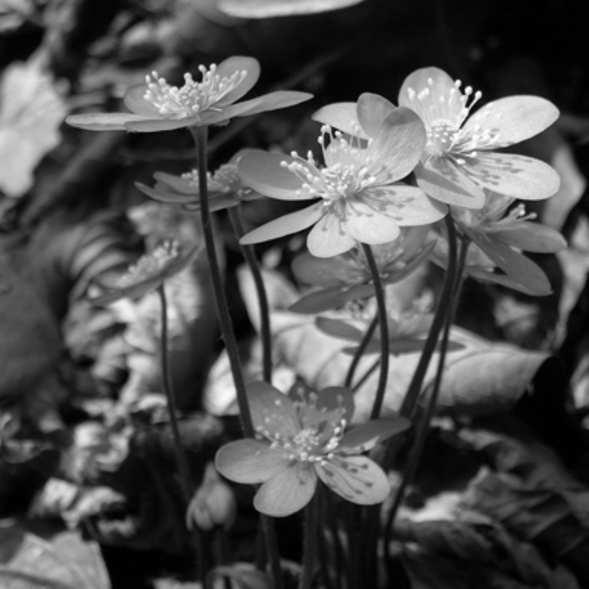}
        \end{minipage}%
        \begin{minipage}{.43\textwidth}
            \centering
            \includegraphics[width=0.98\linewidth]{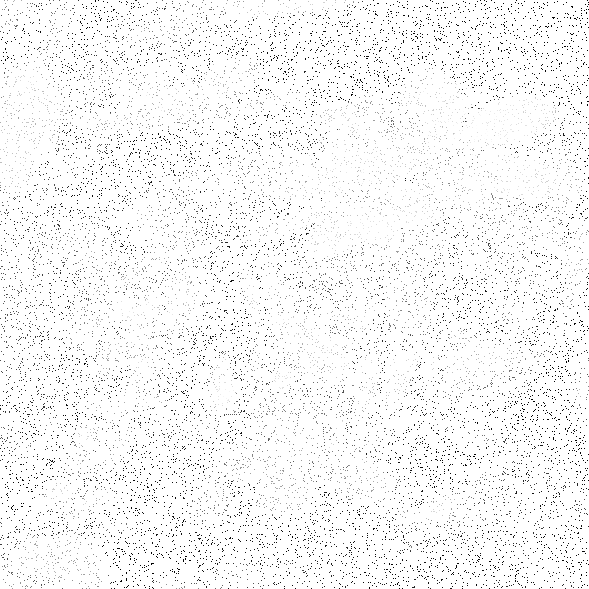}
        \end{minipage}
		\\
		\vspace{3pt}
                \centering
        \begin{minipage}{.43\textwidth}
            \centering
            \includegraphics[width=0.98\linewidth]{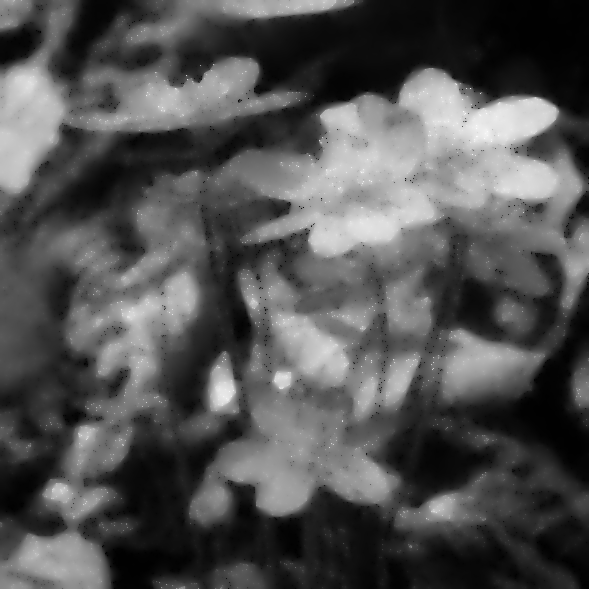}
        \end{minipage}%
        \begin{minipage}{.43\textwidth}
            \centering
            \includegraphics[width=.98\linewidth]{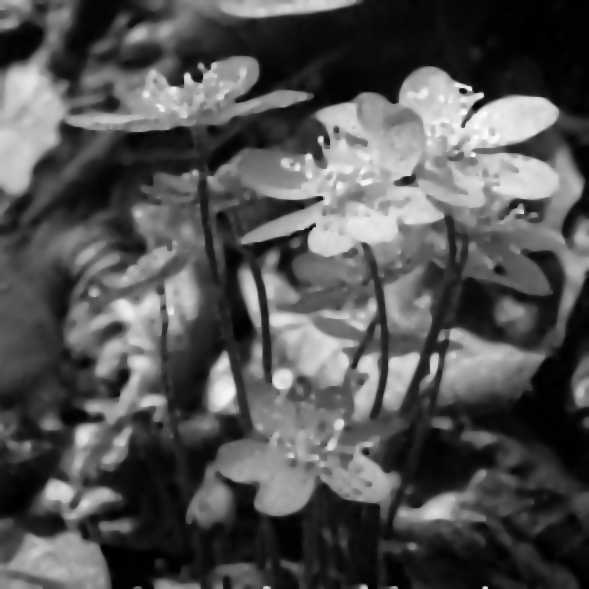}
        \end{minipage}
        \caption{Top left: Original image. Top right: 95 \% random data loss. Bottom left: Inpainted with $\mathrm{TV}_{\epsilon}$ - SSIM: 0.7512. Bottom right: Inpainted with elastica - SSIM: 0.8896.}
        \label{fig:Flower}
\end{figure}

\subsection{Convergence rates}\label{sec:num:convrates}

When denoising using $p = 2$ in \eqref{eq:denoise}, the conditions of \cref{theo:Convergence} are fulfilled, and so we investigate the convergence rates numerically in this case. 
The two plots in \cref{fig:gradconv} show $\min_{0 \leq l \leq k} \| \nabla V(\mathbf{u}^l) \|/\| \nabla V(\mathbf{u}^0) \|$ for DG and DG-ADAPT applied to the Euler's elastica regularized denoising problem with $p = 2$ shown in \cref{fig:MRI} for two choices of $\epsilon$.
Each plot shows the result of initializing with a $\tau_0$ chosen by trial and error to yield the best convergence rate, and with a much smaller $\tau_0$. 
Both algorithms were  started from the same random initalization $\mathbf{u}^0$ and with the same initial time step $\tau_0$. For DG-ADAPT, the additional parameters were chosen as $\rho = 0.99$, $c_1 = 0.7, c_2 = 0.9$ and $\gamma = 1.005$.  
Note that the left hand plot is semilogarithmic while the right hand plot is logarithmic. The left hand plot shows linear convergence for the DG algorithm when $\tau_0$ is chosen correctly and a much slower rate for the suboptimal $\tau_0$. 
The linear convergence can be expected by \cref{theo:ConvergenceStrConv} if the choice of $\epsilon = 10^{-4}$ means $V$, which is twice differentiable, becomes strongly convex in a neighbourhood of the minimizer, or if it is a P\L{}-function. In the right hand plot, where $\epsilon = 10^{-7}$ leads to a more ill-conditioned problem, the convergence rate appears closer to that predicted in \cref{theo:ConvergenceConv}, indicating that a neighborhood of strong convexity has not yet been reached. Both plots show that using adaptive step sizes yields faster convergence than fixed step sizes, especially when $\tau_0$ is not carefully chosen beforehand.

\begin{figure}[ht]
		\begin{minipage}{.45\textwidth}
            \centering
            \includegraphics[width=0.98\linewidth]{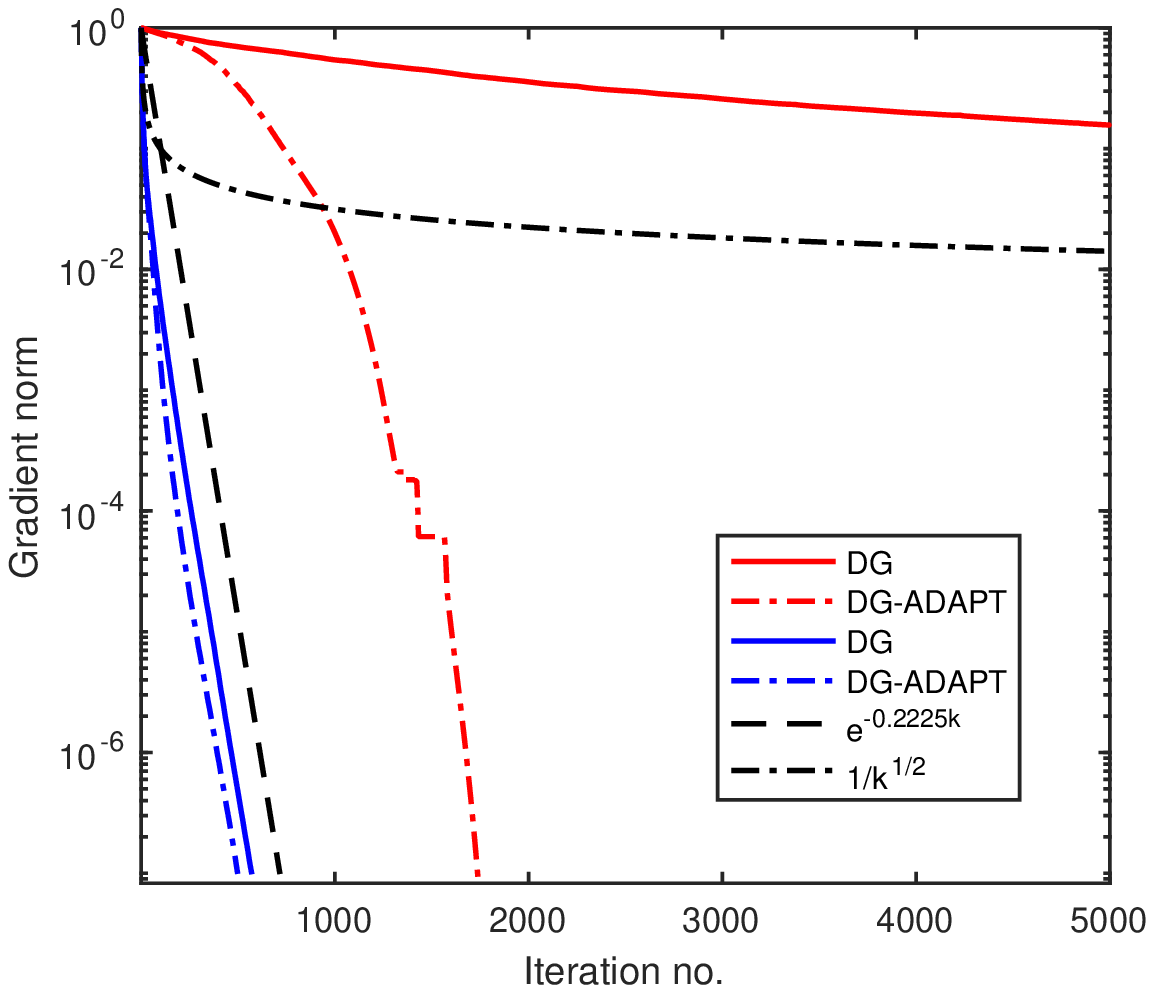}
        \end{minipage}%
\begin{minipage}{.45\textwidth}
            \centering
            \includegraphics[width=.98\linewidth]{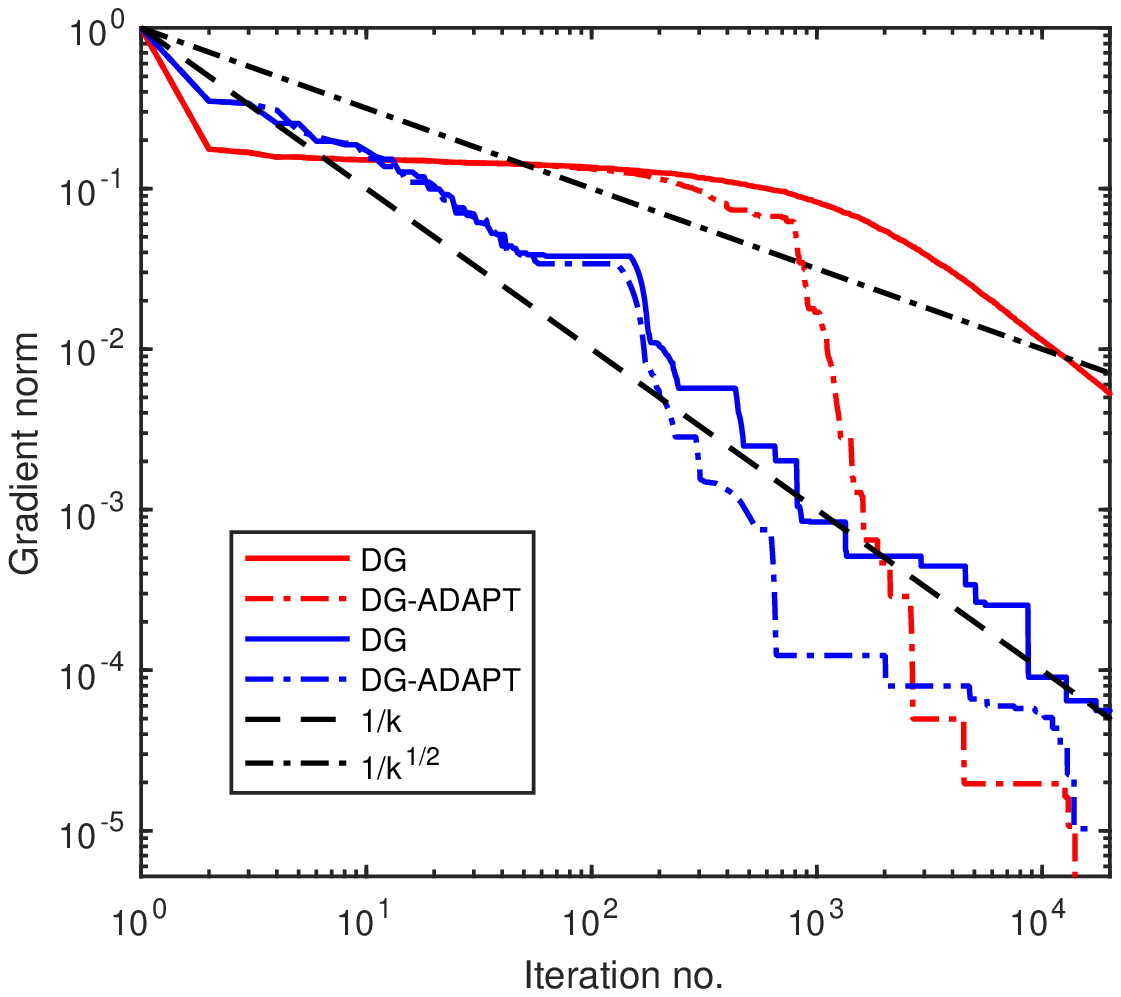}
        \end{minipage}
        \caption{Convergence rates in terms of $\min_{0 \leq l \leq k} \| \nabla V(\mathbf{u}^l) \|/\| \nabla V(\mathbf{u}^0) \|$ for the Euler's elastica regularized denoising problem with $p = 2$ illustrated in \cref{fig:MRI}. Blue denotes $\tau_0 = 0.38$, red denotes $\tau_0 = 0.38 \cdot 10^{-4}$. Left: Using $\epsilon = 10^{-4}$. Right: Using $\epsilon = 10^{-7}$.}
        \label{fig:gradconv}
\end{figure}

\cref{fig:orderconv} shows convergence rates for four different element orderings. The first ordering is the natural ordering which iterates over pixels starting in one corner and proceeding columnwise. 
The second is red-black ordering where pixels $u_{ij}$ with $i + j$ even are updated first, then pixels with $i + j$ odd. 
Third is a random ordering, with the same ordering used for all time steps. 
Last, we consider the block ordering of the parallelized algorithm as illustrated in \cref{fig:parallelization}. 
The plots of \cref{fig:orderconv} concern the same problem as \cref{fig:gradconv}, but with the DG algorithm only and showing rates in terms of the relative optimality error $(V(\mathbf{u}^k) - V^*)/(V(u^0) - V^*)$, where $V^*$ was produced by running the algorithm for 20 000 iterations. 
Step sizes were chosen individually for the different orderings to produce the best possible convergence. 
The left hand plot plateaus at a relative optimality error of $\sim 10^{-16}$, i.e. machine precision. Both plots show that the asymptotic convergence rate, represented by the slope of the lines, is similar for all orderings, as is to be expected from the independency of ordering in the convergence theorems. 
However, the early rate of the random and red-black orderings of the left hand plot is better than that of the natural and block orderings, suggesting that the choice of ordering affects the constant $\nu$ in \cref{theo:Convergence,theo:ConvergenceConv,theo:ConvergenceStrConv}. We consider this an interesting topic for further investigation.

\begin{figure}[ht]
		\begin{minipage}{.45\textwidth}
            \centering
            \includegraphics[width=0.98\linewidth]{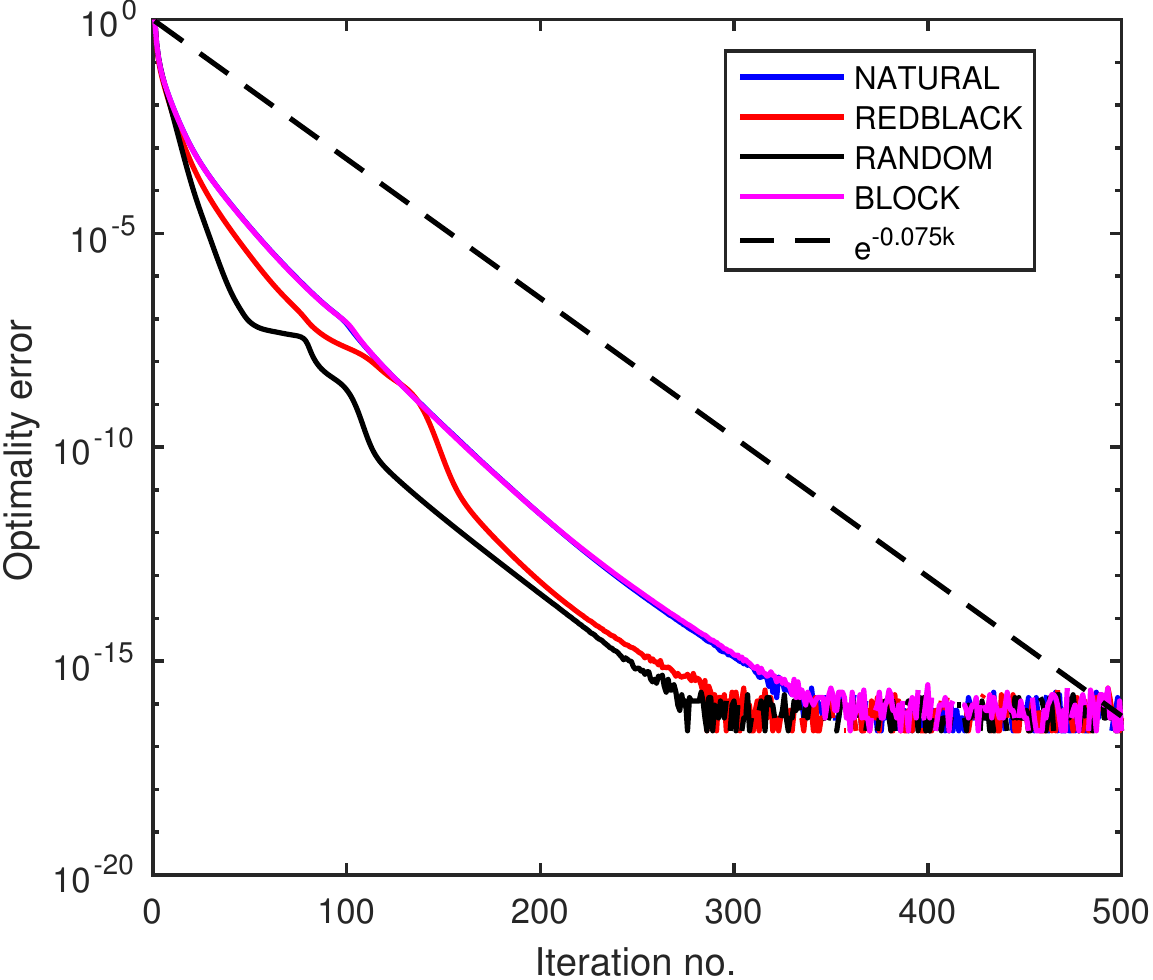}
        \end{minipage}%
\begin{minipage}{.45\textwidth}
            \centering
            \includegraphics[width=.98\linewidth]{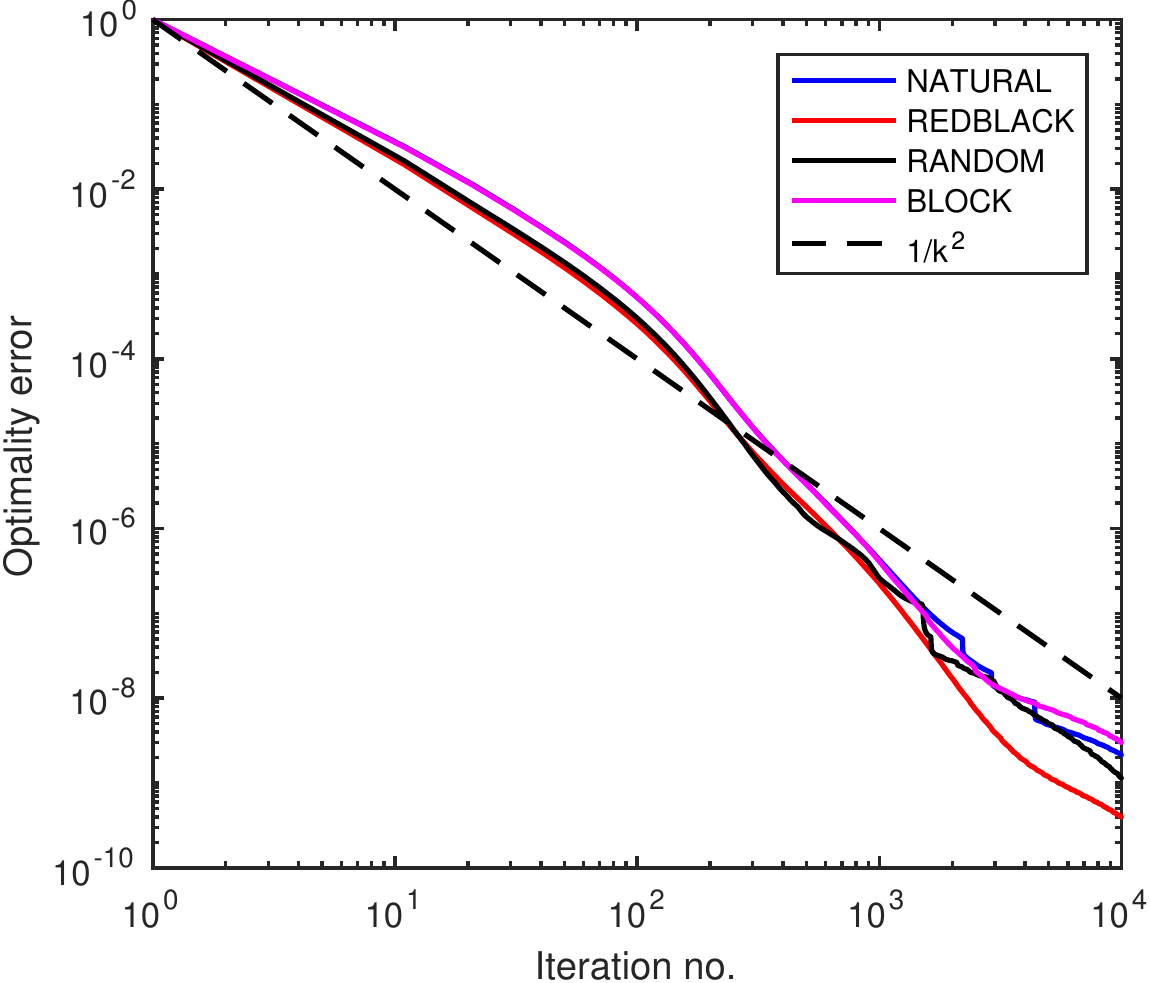}
        \end{minipage}
        \caption{Convergence rates in terms of $(V(\mathbf{u}^k) - V^*)/(V(\mathbf{u}^0) - V^*)$ for the Euler's elastica regularized denoising problem with $p = 2$ illustrated in \cref{fig:MRI}, with different orderings. Left: Using $\epsilon = 10^{-4}$. Right: Using $\epsilon = 10^{-7}$.}
        \label{fig:orderconv}
\end{figure}

\Cref{fig:resolutionconb} shows convergence rates in terms of $(V(\mathbf{u}^k) - V^*)/(V(\mathbf{u}^0) - V^*)$ for a $\mathrm{TV}_\epsilon$ regularized inpainting problem where a square of width 1/2 is missing from the middle of an all-black square image, \textit{i.e.} $\Omega = [0,1]^2$, $D = [1/4,3/4]^2$ and $g = 0$ on $\Omega \backslash D$.
With the parameter choices $a = 1/16$ and $\epsilon = 0.1$, we test with image resolutions  $2^m \times 2^m$, $m = 5,6,7,8,9$ to verify the conclusion of \cref{lemma:MainLemmaSharper}. 
It can be shown that the objective function of the  $\mathrm{TV}_\epsilon$ regularized inpainting problem satisfies the assumptions of \cref{theo:ConvergenceStrConv} with $\sigma = h^2$ and $L_{\max} = h^2(1 + 4a\epsilon^{-1/2}h^{-2})$.
Hence, with $\nu$ chosen optimally as explained in the remark after \cref{lemma:MainLemmaSharper} one should expect the following rate of convergence for the logarithmic error:
\begin{align*}
\log \left( \frac{V(\mathbf{u}^{k}) - V^*}{V(\mathbf{u}^{0}) - V^*}   \right) \leq k\log\left(1 - \dfrac{2\sigma}{\nu} \right) \approx k \frac{2(2R+1)\epsilon^{1/2}}{a}h^2.
\end{align*}
This is observed in \cref{fig:resolutionconb}. For each resolution, the iterations eventually converge at a fixed linear rate. This rate, obtained by exponential fitting, decreases by a factor of 4 as $n = 2^{2m}$ quadruples.

\begin{figure}[ht]
            \centering
            \includegraphics[width=0.5\linewidth]{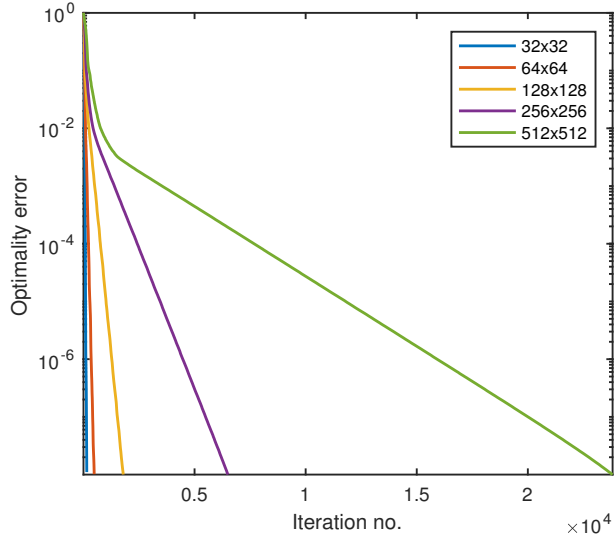}
        \caption{Convergence rates in terms of $(V(\mathbf{u}^k) - V^*)/(V(\mathbf{u}^0) - V^*)$ for the $\mathrm{TV}_{\epsilon}$ square inpainting problem with $p = 2$, for varying problem sizes.}
        \label{fig:resolutionconb}
\end{figure}


\subsection{Execution time} 

As a general algorithm suited to the kind of non-convex minimization problems that the Euler's elastica problem poses, it is reasonable to compare the DG algorithms to the iPiano algorithm of \cite{Ochs2014}; in particular, we use Algorithm 4 from this article. Inspired by Polyak's Heavy-ball algorithm and the proximal gradient algorithm, iPiano considers minimization problems of the form
\begin{align*}
\min_{\mathbf{u} \in \mathbb{R}^n} f(\mathbf{u}) + g(\mathbf{u}),
\end{align*}
where $g$ is convex and possibly non-smooth while $f$ is smooth and possibly non-convex, and iterates based on the update scheme
\begin{align*}
\mathbf{u}^{k+1} = (I + \alpha \partial g)^{-1}(\mathbf{u}^k - \alpha \nabla f(\mathbf{u}^k) + \beta(\mathbf{u}^k - \mathbf{u}^{k-1})),
\end{align*}
where $(I + \alpha \partial g)^{-1}$ denotes a proximal step by
\begin{align*}
(I + \alpha \partial g)^{-1}(\mathbf{y}) = \arg \min_{\mathbf{z} \in \mathbb{R}^n} \dfrac{\| \mathbf{z} - \mathbf{y} \|^2}{2} + \alpha g(\mathbf{z}).
\end{align*}
We wish to time the algorithms on a problem with non-differentiable terms, and so we use a variation on the discrete elastica regularizer \eqref{eq:ElasticaRegDisc}, taking 
\begin{align*}
J(\mathbf{u}) = a\sum \limits_{i=1}^{n_x} \sum \limits_{j=1}^{n_y} \overline{G}_{ij} + b\sum \limits_{i=1}^{n_x} \sum \limits_{j=1}^{n_y} \left( \delta_{x}^+ \dfrac{\delta_{x}^- u_{ij}}{w_{i-\frac{1}{2},j}} + \delta_{y}^+ \dfrac{\delta_{y}^- u _{ij}}{w_{i,j-\frac{1}{2}}} \right)^2 G_{ij} =: aT(\mathbf{u}) + bK(\mathbf{u}) ,
\end{align*}
where
\begin{align*}
\overline{G}_{ij} = \sqrt{(\delta_x^- u_{ij})^2 + (\delta_y^- u_{ij})^2}.
\end{align*}
That is, the TV term $T$ is not differentiable but the curvature term $K$ is. The choice of $f$ and $g$ in the iPiano algorithm depends on whether the fidelity term is differentiable ($p = 2$) or not ($p=1$). We take $f = K + d$ if $p = 2$, but $f = K$ if $p = 1$. Likewise, we take $g = T$ if $p = 2$, but $g = T + d$ if $p = 1$. In both cases, evaluating $(I + \alpha \partial g)^{-1}$ is equivalent to solving a TV regularization problem, which is done efficiently using the Chambolle-Pock algorithm \cite{Chambolle2011}. This algorithm can be accelerated in the case of a uniformly convex fidelity term, i.e. if $d$ is a discrete $L^2$ norm. If it is not, as is the case when $d$ is a discrete $L^1$ norm, no acceleration is possible. The DG-ADAPT algorithm requires the computation of gradients; they are computed using the smoothed \eqref{eq:ElasticaRegDisc}; also, when $p = 1$, the additional smoothing
\begin{align*}
\|u - g \|_{1} = \sum_{i,j}|u_{ij} - g_{ij}| \approx \sum_{i,j}\sqrt{(u_{ij} - g_{ij})^2 + \varepsilon} := \|u - g \|_{1, \varepsilon}
\end{align*}
was used with $\varepsilon = 10^{-12}$ to compute gradients in the DG-ADAPT algorithm.

All algorithms were implemented in serial versions using MEX, and timed. Note that the above non-smoothed TV term was used in the DG/DG-ADAPT algorithms as well for this test. \cref{tab:L2,tab:L1} show timing results for a denoising test on a 512$\times$512 image for different values of $\epsilon$ using a discrete $L^2$ norm and a discrete $L^1$ norm for the fidelity term, respectively. For each $\epsilon$, a reference solution $\bar{V}$ was found by running the DG algorithm for 20 000 iterations or until a minimizer was found with machine precision. The algorithms were then tested on the problem, running until the iterations reached a value of $V(\mathbf{u}^k) \leq 1.0001\cdot\bar{V}$ or 4000 iterations. 
Both algorithms require the solution of a subproblem; a root finding problem for the DG algorithms and the evaluation of a proximal operator for the iPiano algorithm. For the DG algorithms the tolerance of the root finding algorithm was kept at a fixed value while for the iPiano algorithm, the tolerance in the prox operator evaluation was adjusted to obtain the fastest runtime while still converging. 
In DG-ADAPT, the parameter choices $c_1 = 0.7, c_2 = 0.9, \rho = 0.98$ and $\gamma = 1.005$ were used for the $L^2$ test, and $c_1 = 0.2, c_2 = 0.7, \rho = 0.995$ and $\gamma = 1.0025$  were used for the $L^1$ test.

From both tables, it is apparent that the DG and DG-ADAPT algorithms both scale better with $\epsilon$ than iPiano, which reached the maximum number of iterations at $\epsilon = 10^{-6}$ in the $L^1$ test and hence was not timed with $\epsilon = 10^{-7}$. 
However, for larger values of $\epsilon$, iPiano appears to be the better choice.  
The time usage per iteration increases with $\epsilon$ for both iPiano and DG/DG-ADAPT; for iPiano, this is due to the precision in the prox operator evaluation increasing which requires more time. 
For DG/DG-ADAPT, the slight increase can be explained by an increase in the amount of iterations needed by the Brent-Dekker algorithm to solve the scalar subproblems. 
Also note that in \cref{tab:L1}, we can see that DG-ADAPT is not noticeably faster than DG in most cases, indicating that using gradients of smoothed versions of the objective function is not sufficient to accelerate convergence for non-smooth problems.


\begin{table}[tbhp]
{
\caption{Results of $L^2$ test. Format: (Iterations/CPU time (s)). Best times in bold.}
\label{tab:L2}
\begin{center}
\begin{tabular}{l*{3}{c}}
$\epsilon$ & iPiano & DG & DG-ADAPT \\
\hline
$10^{-1}$ & \textbf{30/8.90}    	& 29/15.61 		& 28/17.39 \\
$10^{-2}$ & \textbf{32/9.60}		& 28/14.79 		& 27/16.54 \\
$10^{-3}$ & \textbf{38/12.40	}   	& 38/20.42 		& 35/21.67 \\
$10^{-4}$ & \textbf{59/16.78	}   	& 67/38.22 		& 57/36.62 \\
$10^{-5}$ & \textbf{216/47.74} 	& 115/69.51 		& 89/60.20 \\
$10^{-6}$ & 1684/556.11 	& 180/115.26 	& \textbf{131/91.19} \\
$10^{-7}$ & 3968/1071.37	& 269/196.12  	& \textbf{204/151.71}\\
\end{tabular}
\end{center}
}
\end{table}

\begin{table}[tbhp]
{
\caption{Results of $L^1$ test. Format: (Iterations/CPU time (s)). Best times in bold.}
\label{tab:L1}
\begin{center}
\begin{tabular}{l*{3}{c}}
$\epsilon$ & iPiano & DG & DG-ADAPT \\
\hline
$10^{-1}$ & \textbf{23/21.73}  & 199/171.82 & 168/161.73  \\
$10^{-2}$ & \textbf{59/29.67}  & 288/250.23 & 247/231.56  \\
$10^{-3}$ & \textbf{146/46.36}  & 305/255.12 & 252/231.50  \\
$10^{-4}$ & \textbf{401/229.04} & 300/246.27 & 253/223.21  \\
$10^{-5}$ & 1399/2181.81  & \textbf{303/246.90} & 292/257.47  \\
$10^{-6}$ & 4000/18566.24  & \textbf{303/242.00} & 304/267.11  \\
$10^{-7}$ & N/A  & \textbf{400/335.10} & 423/373.91  \\
\end{tabular}
\end{center}
}
\end{table}


\cref{tab:L2smooth} shows timing results of denoising test problems using the smoothed elastica regularizer \eqref{eq:ElasticaRegDisc}, on a 512$\times$512 image using a squared $L^2$ fidelity term, comparing the DG and DG-ADAPT algorithms to the gradient descent and Heavy-ball algorithms with Armijo step size selection. Here, the parameters of the DG-ADAPT algorithm were chosen as $\rho = 0.99$, $c_1 = 0.7$, $c_2 = 0.9$ and $\gamma = 1.005$ after experimentation. The $\tau$ parameter in the DG algorithm was chosen to give the fastest convergence at $\epsilon = 10^{-4}$, and the same $\tau$ was used as initial $\tau_0$ in DG-ADAPT. The table shows that the DG and DG-ADAPT algorithms outperform the gradient descent and Heavy-ball algorithms on the problem for all $\epsilon$ except $\epsilon = 10^{-1}$, with increasing difference as $\epsilon \rightarrow 0$. The gradient descent algorithm reached the maximum of 4000 iterations at $\epsilon = 10^{-6}$ and was therefore not tested with smaller $\epsilon$.

\begin{table}[tbhp]
{
\caption{Results of $L^2$ test with smooth $\mathrm{TV}_{\epsilon}$ term. Format: (Iterations/CPU time (s)). Best times in bold.}
\label{tab:L2smooth}
\begin{center}
\begin{tabular}{l*{4}{c}}
$\epsilon$ & Gradient Descent & Heavy-ball & DG & DG-ADAPT  \\
\hline
$10^{-1}$ & \textbf{12/5.16}    	 & 23/12.79		 &	20/9.46		& 19/8.61\\
$10^{-2}$ & 27/14.78 	 & 29/17.95 		 &	25/12.72   	& \textbf{24/11.85}\\
$10^{-3}$ & 145/101.00	 & 38/25.16 		 &	32/17.43 	& \textbf{30/15.76}\\
$10^{-4}$ & 388/319.57	 & 96/73.59		 & 	\textbf{42/24.66} 	& 43/24.71\\
$10^{-5}$ & 1310/1271.73	 & 312/284.49 	 &	\textbf{68/43.95	}	& 76/47.92\\
$10^{-6}$ & 4001/4621.38	 & 1018/1079.10 	 & 	119/83.15	& \textbf{112/73.07}\\
$10^{-7}$ & N/A			 & 2922/3765.52	 & 	194/142.05	& \textbf{181/116.88}\\
$10^{-8}$ & N/A			 & 4001/5488.83	 & 	376/291.73	& \textbf{398/260.18}
\end{tabular}	
\end{center}
}
\end{table}

Using the DG-PARALLEL algorithm with column splitting, a speedup of 2-2.5 was observed when employing 4 cores, with larger speedup values for larger images.

\subsection{Dependence on starting point}
To investigate the influence of the starting point on the performance of the algorithm in the minimisation of the non-convex Euler elastica problem, we tested the inpainting problem with DG and iPiano, comparing execution time and reconstruction quality of the two algorithms when starting from a random or unicolor (black) starting image, or from the original image. 

As seen in \cref{fig:inpaintStart}, the DG algorithm produces different but acceptable reconstructions depending on the starting guess. The iPiano algorithm works when starting from a random image and the original image, but is worse with a unicolor starting image. Assuming that starting from the original image gives the best reconstruction, we compare reconstructions starting from other images to this. \Cref{fig:inpaintDiff} shows differences between images obtained starting from the original and the images obtained when starting from random or unicolor images. We can see that the reconstructions are largely in agreement except in certain areas such as the stamens of the top right flower. In \cref{tab:inpaintingTimes} we see that the iPiano algorithm is slower than the DG algorithms when it comes to inpainting, and also that the unicolor initialization produced an answer which was pretty far from optimal as compared to the other initializations.

\begin{table}[ht]
{
\caption{Results of inpainting test. Format: (Final energy/CPU time (s)). Best times in bold.}
\label{tab:inpaintingTimes}
\begin{center}
\begin{tabular}{l*{3}{c}}
Initialization & iPiano & DG & DG-ADAPT \\
\hline
Random      & 0.012334/3434 & \textbf{0.012323/431 } & 0.012323/460 \\
Unicolor    & 0.014892/2934 & 0.012324/2740 & \textbf{0.012324/438} \\
Original    & 0.012263/1131 & \textbf{0.012263/145}  & 0.012263/208 \\
\end{tabular}
\end{center}
}
\end{table}

\begin{figure}[ht]
                \centering
        \begin{minipage}{.35\textwidth}
            \centering
            \includegraphics[width=0.98\linewidth]{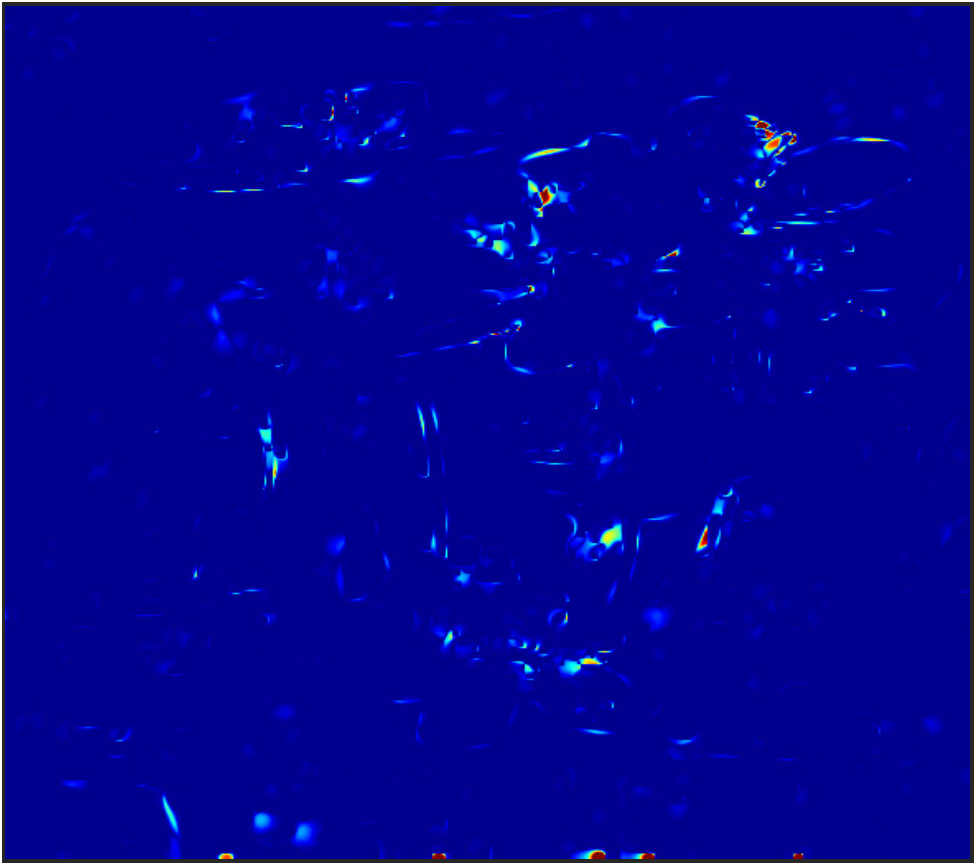}
        \end{minipage}%
        \begin{minipage}{.35\textwidth}
            \centering
            \includegraphics[width=0.98\linewidth]{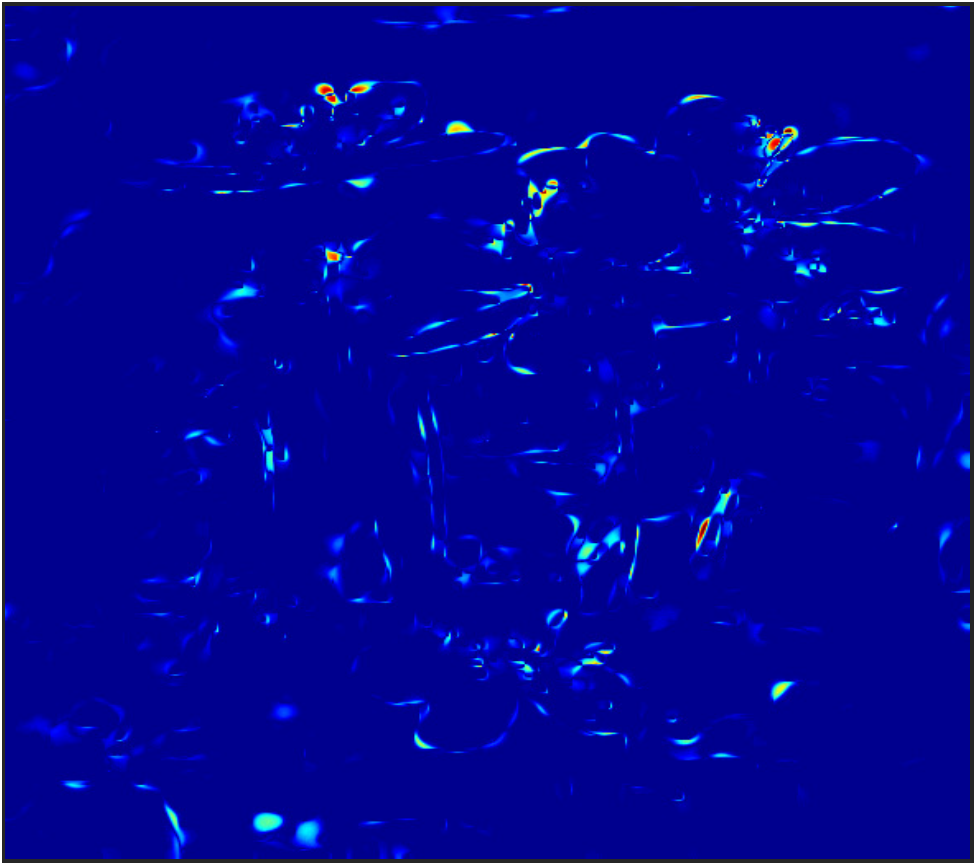}
        \end{minipage}
		\\
		\vspace{3pt}
                \centering
        \begin{minipage}{.35\textwidth}
            \centering
            \includegraphics[width=0.98\linewidth]{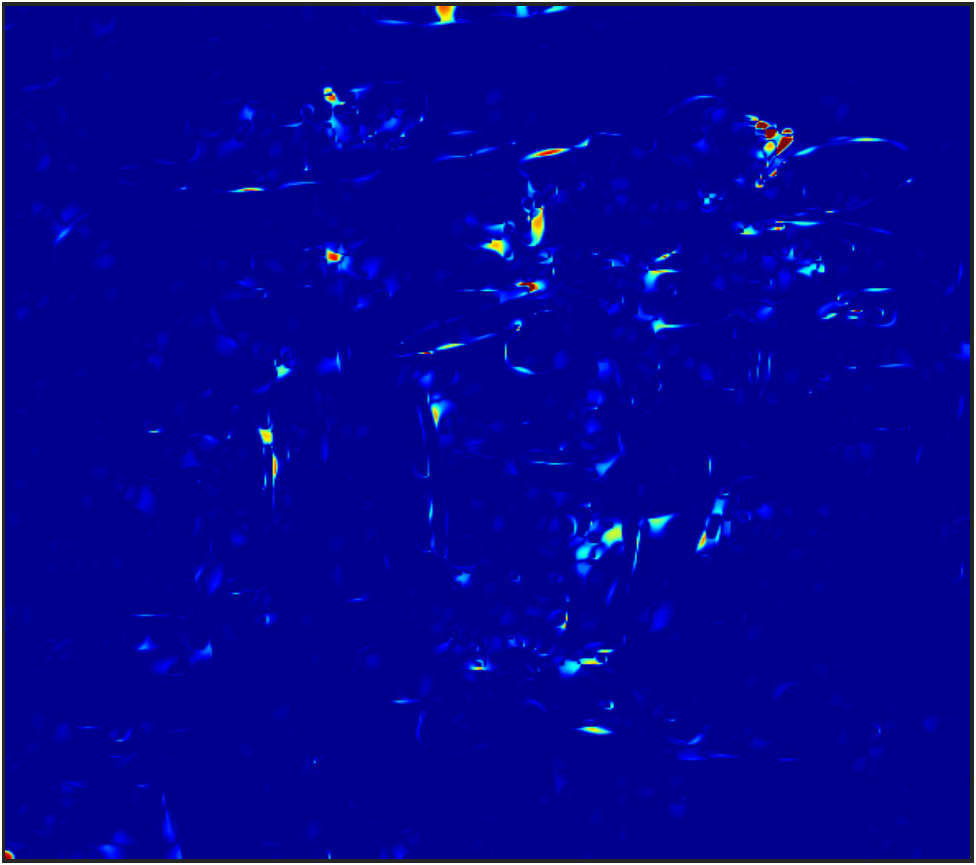}
        \end{minipage}%
        \begin{minipage}{.35\textwidth}
            \centering
            \includegraphics[width=.98\linewidth]{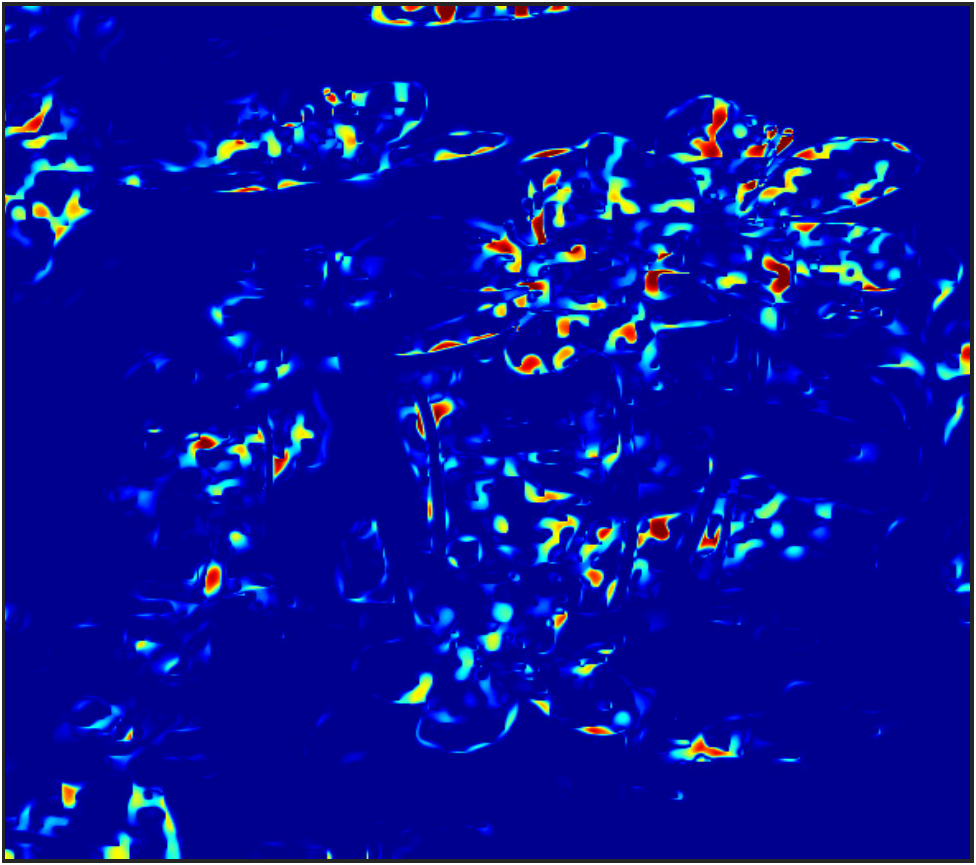}
        \end{minipage}
        \\
        \vspace{2pt}
        \begin{minipage}{\textwidth}
            \centering
            \includegraphics[width=0.8\linewidth]{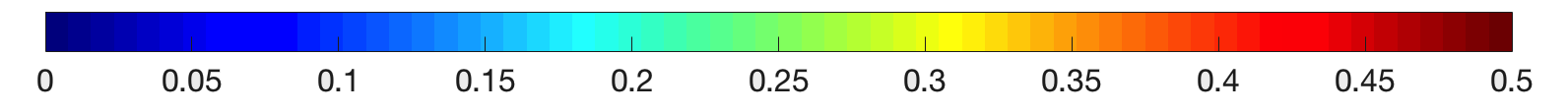}
        \end{minipage}
        \caption{Difference from optimum when inpainting with different starting values. Left column: DG. Right column: iPiano. Top: Random initial value. Bottom: Unicolor nitial value.}
        \label{fig:inpaintDiff}
\end{figure}

\begin{figure}[ht]
                \centering
        \begin{minipage}{.4\textwidth}
            \centering
            \includegraphics[width=0.98\linewidth]{FlowerElastica.png}
        \end{minipage}%
        \begin{minipage}{.4\textwidth}
            \centering
            \includegraphics[width=0.98\linewidth]{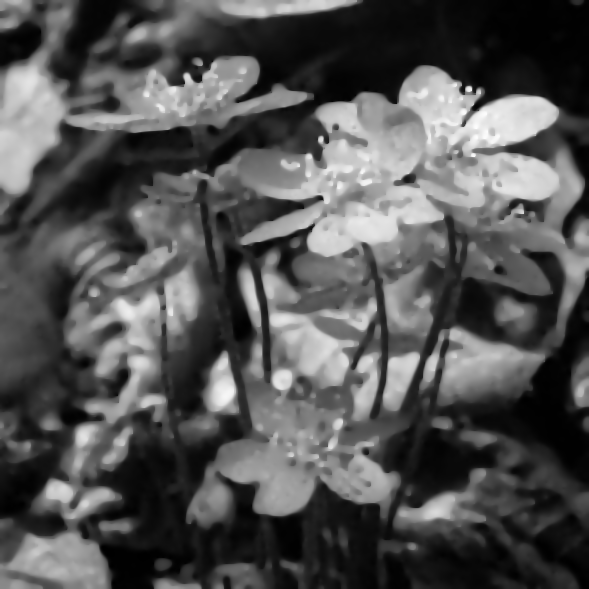}
        \end{minipage}
		\\
		\vspace{3pt}
                \centering
        \begin{minipage}{.4\textwidth}
            \centering
            \includegraphics[width=0.98\linewidth]{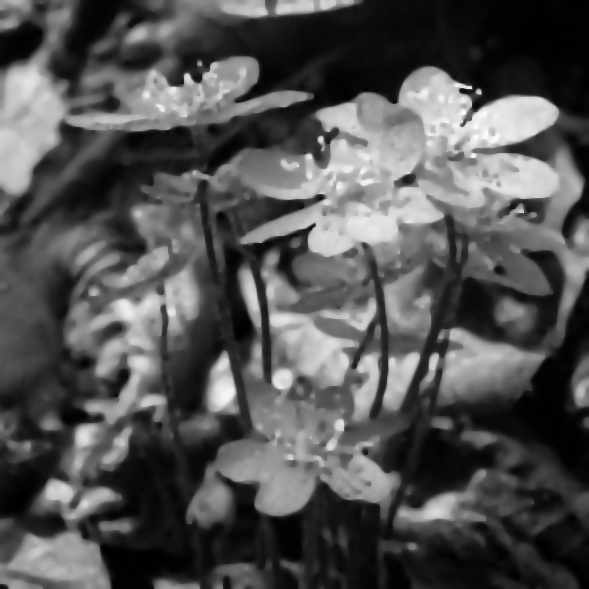}
        \end{minipage}%
        \begin{minipage}{.4\textwidth}
            \centering
            \includegraphics[width=.98\linewidth]{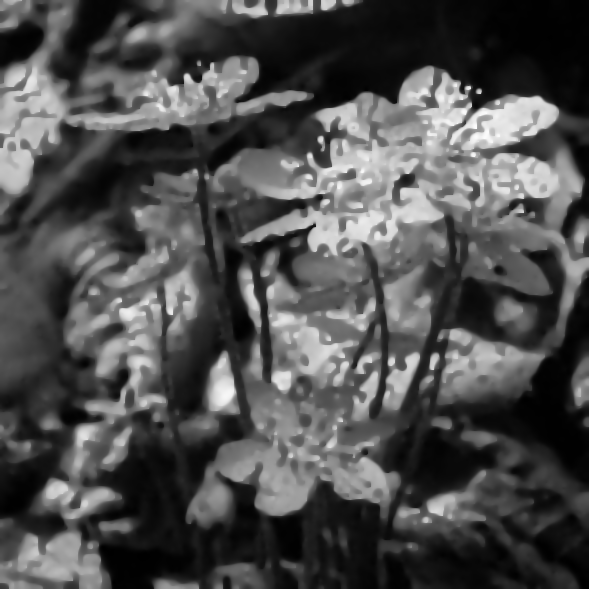}
        \end{minipage}
        \\
		\vspace{3pt}
                \centering
        \begin{minipage}{.4\textwidth}
            \centering
            \includegraphics[width=0.98\linewidth]{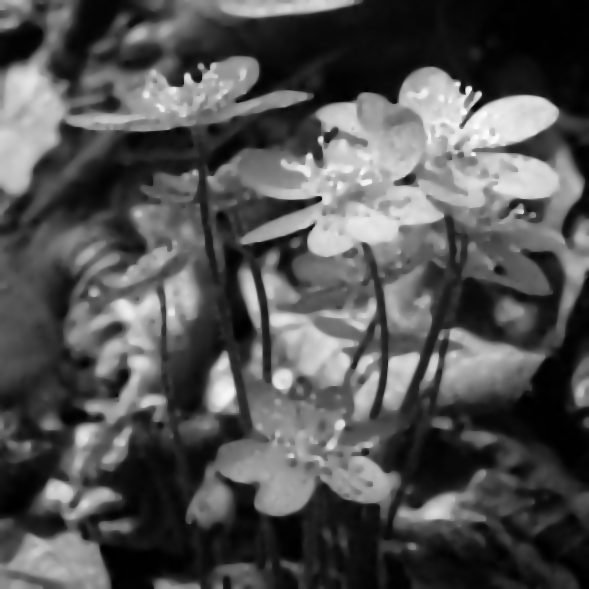}
        \end{minipage}%
        \begin{minipage}{.4\textwidth}
            \centering
            \includegraphics[width=.98\linewidth]{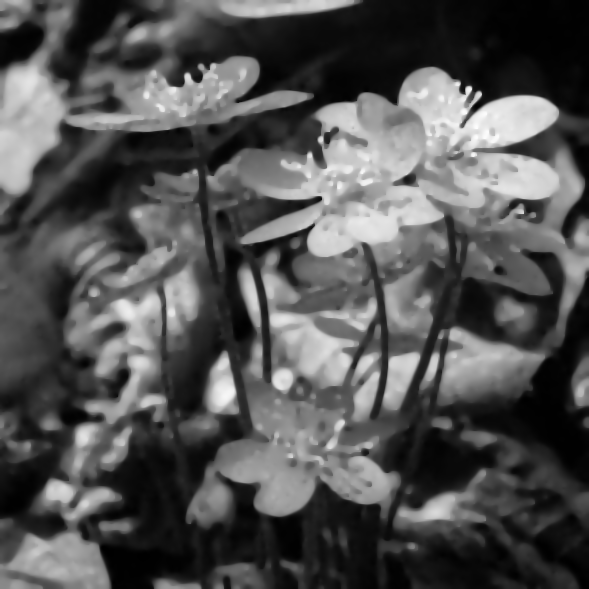}
        \end{minipage}
        \caption{Inpainting with different starting values. Left column: DG. Right column: iPiano. Top: Random initial value. Middle: Unicolor (black) initial value. Bottom: Original initial value.}
        \label{fig:inpaintStart}
\end{figure}

\clearpage

\section{Conclusion}
We have introduced a novel method for solving non-convex optimization problems and tested it on Euler's elastica regularized variational image analysis problems. We have produced a convergence rate estimate for non-convex problems assuming that $V$ is continuously differentiable with Lipschitz continuous gradient. This rate does not depend on the problem size $n$, but rather on the dependency radius $R$ of V. Numerical tests confirm the quality of images denoised and inpainted with Euler's elastica as a regularizer, that the time step adaptivity proposed in \cref{alg:DG-ADAPT} can improve execution time, and that our algorithm performs faster than the iPiano algorithm in certain instances.

There are still open questions, two of which carry special importance. Firstly, it should be possible to improve upon the time step adaptivity of \cref{alg:DG-ADAPT} which, while effective in some instances, is rudimentary. It may be possible to employ a stochastically ordered version as in \cite{CoordDescNesterov} instead. Secondly, one should investigate the convergence properties of \cref{alg:DG} when applied to non-differentiable problems since it is still applicable then. One may also generalize \cref{alg:DG} to a manifold setting using the tools developed in \cite{Celledoni2014977}. Finally, one may wish to apply the discrete gradient approach to other non-convex optimization problems.
\section*{Acknowledgements}
CBS acknowledges the support of the Engineering and Physical Sciences Research Council (EPSRC) 'EP/K009745/1', the Leverhulme Trust project 'Breaking the non-convexity barrier', the EPSRC grant 'EP/M00483X/1', the EPSRC centre 'EP/N014588/1', the Alan Turing Institute 'TU/B/000071', the Isaac Newton Institute, the Cantab Capital Institute for the Mathematics of Information and from NoMADS (Horizon 2020 RISE project grant). This work was supported by the European Union's Horizon 2020 research and innovation programme under the Marie Sklodowska-Curie grant agreement No. 691070.
The authors wish to thank Antonin Chambolle, Elena Celledoni, Brynjulf Owren, and Reinout Quispel for their helpful discussions during this work. We also wish to thank the anonymous referees for their thorough advice.
\bibliography{EP,tb}
\bibliographystyle{siamplain}

\end{document}